\title{Ef{\kern0pt}f{\kern0pt}iciency analysis of double perturbed pairwise comparison matrices}
\date{}
\newenvironment{customlemma}[1]
  {\innercustomlemma}
  {\endinnercustomlemma}
\newtheorem{remark}{Remark} 
\newtheorem{theorem}{Theorem}
\newtheorem{corollary}{Corollary}
\newtheorem{lemma}{Lemma}
\newtheorem{proposition}{Proposition}
\newtheorem{question}{Open problem}
\theoremstyle{definition}
\newtheorem{example}{Example}
\newtheorem{definition}{Definition}
\DeclareMathOperator{\diag}{diag}
\DeclareMathOperator{\adj}{adj}
\begin{document}

\sloppy 
\allowdisplaybreaks 

\title{Ef{\kern0pt}f{\kern0pt}iciency analysis of double perturbed pairwise comparison matrices}

\author{Krist\'of \'Abele-Nagy$^{\,\,1,2}$\thanks{\textit{E-mail: kristof.abele-nagy@uni-corvinus.hu}}
\\
\and S\'andor Boz\'oki$^{\,\,1,2}$\thanks{\textit{E-mail: bozoki.sandor@sztaki.mta.hu}} \\
\and Örs Rebák\thanks{\textit{E-mail: rebakors@gmail.com}}}

\maketitle

\footnotetext[1]{Laboratory on Engineering and Management
Intelligence, Research Group of Operations Research and Decision
Systems, Institute for Computer Science and Control, Hungarian
Academy of Sciences; Mail: 1518 Budapest, P.O.~Box 63, Hungary.}
\footnotetext[2]{Department of Operations Research and Actuarial Sciences, Corvinus
University
of Budapest, Hungary
}

\begin{abstract}
Ef{\kern0pt}f{\kern0pt}iciency is a core concept of multi-objective optimization problems and multi-attribute decision making. In the case of pairwise comparison matrices a weight vector is called ef{\kern0pt}f{\kern0pt}icient if the approximations of the elements of the pairwise comparison matrix made by the ratios of the weights cannot be improved in any position without making it worse in some other position. A pairwise comparison matrix is called double perturbed if it can be made consistent by altering two elements and their reciprocals. The most frequently used weighting method, the eigenvector method is analyzed in the paper, and it is shown that it produces an ef{\kern0pt}f{\kern0pt}icient weight vector for double perturbed pairwise comparison matrices.

\textbf{Keywords:} pairwise comparison matrix, ef{\kern0pt}f{\kern0pt}iciency, Pareto optimality, eigenvector
\end{abstract}

\section{Introduction}
Ranking alternatives, or picking the best alternative is a commonly investigated problem. The case of a single cardinal objective function to be maximized or minimized is long studied by various operations research disciplines. This is however often not feasible. Alternatives can be ranked by assigning a cardinal utility to them, or by setting up ordinal preference relations among them. In the case of a single criterion and a single decision maker, modelling the preferences is often possible through standard methods. If there are multiple, often contradicting criteria, this becomes signif{\kern0pt}icantly harder. A dominant alternative, which is the best with respect to all criteria, very rarely exists. Thus, when a decision making method is used to aid the decision of a decision maker, some form of compromise is needed. Modelling the preferences of the decision maker by ranking or weighting the criteria can accomplish such a compromise. It allows the ``best'' alternative to be chosen (or the possible alternatives to be ranked) with respect to the subjective preferences of the decision maker. Examples of multi-criteria decision problems range from ``Which house to buy?'' or ``What should the company invest in?'' to public tenders.

When weighting criteria, giving the weights directly is almost never feasible. Instead, a common method is to apply pairwise comparisons. Answers to the questions ``How many times is Criterion A more important than Criterion B?'' and so on (which are explicit cardinal ratios) can be arranged in a matrix, called a pairwise comparison matrix (PCM). Formally, a PCM is a square matrix $\mathbf{A}=[a_{ij}]_{i,j=1,\dots,n}$ with the properties $a_{ij}>0$ and $a_{ij}=1/a_{ji}$ (which implies $a_{ii}=1$). If the cardinal transitivity property $a_{ik}a_{kj}=a_{ij}$ for all $i,j,k=1,\dots,n$ also holds for a PCM, it is called consistent, otherwise it is called inconsistent \cite{Saaty1977}. Let $\mathcal{PCM}_n$ denote the set of PCMs of size $n\times n$.
The next step is to extract the weights of criteria from the PCM.
Several methods exist for this task
\cite{BajwaChooWedley2008,ChooWedley2004,Dijkstra2013,GolanyKress1993}.
\color{black} The eigenvector method (EM) is one of the classical
\cite{Saaty1977}, important and most often studied weighting
methods related to pairwise comparison matrices, its further
analysis is actual and relevant both in decision theory and
operations research. We focus on EM in this paper. \color{black}
The eigenvector method gives the weight vector
$\mathbf{w}^{EM}=(w_1,\dots,w_n)^{T}$ as the right Perron
eigenvector of $\mathbf{A} \in \mathcal{PCM}_n$, thus
$\mathbf{A}\mathbf{w}^{EM}=\lambda_{\max} \mathbf{w}^{EM}$ holds,
where $\lambda_{\max}$ is the principal eigenvalue of
$\mathbf{A}$. $\lambda_{\max}\geq n$, and $\lambda_{\max} = n$ if
and only if $\mathbf{A}$ is consistent \cite{Saaty1977}. A
consistent PCM can be written as
\[
\mathbf{A}  =
\left(\begin{array}{ccccc}
1 & x_1 & x_2 & \dots & x_{n-1} \\ 1/x_1 & 1 & x_2/x_1 & \dots & x_{n-1}/x_1 \\
1/x_2 & x_1/x_2 & 1 & \dots & x_{n-1}/x_2 \\
\vdots & \vdots & \vdots & \ddots & \vdots \\
1/x_{n-1} & x_1/x_{n-1} & x_2/x_{n-1} & \dots & 1
\end{array} \right)\in \mathcal{PCM}_n,
\]
where $x_1,\dots,x_{n-1}>0$.

The elements of a PCM approximate the ratios of the weights,
therefore the ratios of the elements of the weight vector should
be as close as possible to the corresponding matrix elements. If a
weight vector cannot be trivially improved in this regard (there
is no other weight vector which is at least as good approximation,
and strictly better in at least one position), it is called Pareto
optimal or ef{\kern0pt}f{\kern0pt}icient. It has been proved that the eigenvector
method does not always produce an ef{\kern0pt}f{\kern0pt}icient solution
\color{black}\cite[Section
3]{BlanqueroCarrizosaConde2006}\color{black}. However, in some
special cases the eigenvector method always gives an ef{\kern0pt}f{\kern0pt}icient
weight vector. If the PCM is simple perturbed, i.e., it dif{\kern0pt}fers
from a consistent PCM in only one element and its reciprocal, the
principal right eigenvector is ef{\kern0pt}f{\kern0pt}icient
\cite{Abele-NagyBozoki2016}. In the paper this will be extended to
double perturbed PCMs, which only dif{\kern0pt}fer from a consistent PCM in
two elements and their reciprocals.

These special types of PCMs are not just theoretically important, but also occur in real decision problems. Poesz \cite{Poesz2009} gathered a handful of empirical PCMs that were analyzed in \cite{BozokiFulopPoesz2011}. \color{black} Although double perturbed PCMs are rare among large PCMs, they appear more frequently among smaller matrices, in other words, when the number of criteria is small. This is especially true if one considers simple perturbed and consistent PCMs as special cases of double perturbed PCMs (see \cite[Table 1]{BozokiFulopPoesz2011} -- note there is a misprint in the cited Table, the number in the ``3 elements to modify'' column in the $4\times 4$ row should be 6 instead of 0). We also conducted an analysis of the prevalence of double perturbed PCMs among the empirical matrices analyzed in \cite{BozokiDezsoPoeszTemesi2013}. Below is a supplemented version of \cite[Table 1]{BozokiFulopPoesz2011} that also includes the results from \cite{BozokiDezsoPoeszTemesi2013}. The PCMs in \cite{BozokiDezsoPoeszTemesi2013} had sizes $4\times 4$, $6\times 6$ and $8\times 8$. 
The numbers after the $+$ sign are from \cite{BozokiDezsoPoeszTemesi2013}, all others are from \cite{BozokiFulopPoesz2011}. As it can be seen from Table \ref{konz_tablazat}, double (and less) perturbed PCMs are rare among large matrices, but they appear among smaller ones.

\begin{center}
\begin{table}
\centering \captionsetup{justification=centering,margin=0.8cm}
\color{black}
\begin{tabular}{|l|c|c|c|c|c|}
\hline
           & Number of     &            & 1 element & 2 elements & 3 elements     \\
Dimension  &      matrices & Consistent & to modify & to modify  & to modify      \\
\hline
$3\times 3$&         30    &      14    &    16     &      --    &      --        \\
\hline
$4\times 4$&         20+134    &      1+3     &    6+10      &       7+31    &      6*+90         \\
\hline
$5\times 5$&         19    &      1     &    1      &       5    &      1         \\
\hline
$6\times 6$&         21+152    &      0+1     &    1+1      &      1+1     &      0+0         \\
\hline
$7\times 7$&               &            &           &            &                \\
and larger &         47+159   &      0+0     &    1+0      &      0+0     &      0+0         \\
\hline
\end{tabular}
\caption{\small \color{black} The number of element modif{\kern0pt}ications
needed to get a consistent PCM. \newline *There was a misprint in
\cite[Table 1]{BozokiFulopPoesz2011}, this number was 0. The
correct value is 6.} \label{konz_tablazat}
\end{table}
\end{center}
\color{black} 

In Section 2 we will introduce the key def{\kern0pt}initions and tools used in the paper, together with an example. In Section 3 the main result of the paper is presented: through obtaining explicit formulas for the principal right eigenvector and a series of lemmas, the ef{\kern0pt}f{\kern0pt}iciency of the principal right eigenvector is shown for the case of double perturbed PCMs. The proofs of the lemmas, are given in detail in the Appendix. In Section 4 conclusions follow.

\section{Ef{\kern0pt}f{\kern0pt}iciency and perturbed pairwise comparison matrices}

\color{black} The general form of a multi-objective optimization
problem (\cite[Chapter 2]{Ehrgott2000}\cite[Chapter
6]{Steuer1986}) is
\begin{eqnarray*}
&\min \{ f_1(\mathbf{y}), f_2(\mathbf{y}), \ldots, f_m(\mathbf{y}), \ldots, f_M(\mathbf{y})\}   \\
&\text{subject to } \mathbf{y} \in S
\end{eqnarray*}
where $M \geq 2$ denotes the number of objective functions,
$f_m: \mathbb{R}^n \rightarrow \mathbb{R}$ for all $1 \leq m \leq M.$
Variables are $\mathbf{y} = (y_1, y_2, \ldots, y_n)$ and the feasible set is denoted by $S \subseteq \mathbb{R}^n$.

Ef{\kern0pt}f{\kern0pt}iciency or Pareto optimality is a basic concept of multi-objective optimization
and multi-attribute decision making, too.
A vector $\mathbf{y} \in S$ is called ef{\kern0pt}f{\kern0pt}icient, if there does not exist another
vector $\mathbf{y}^{\prime} \in S$  such that
$f_m(\mathbf{y}^{\prime}) \leq f_m(\mathbf{y}) $ for all $1 \leq m \leq M,$
and $f_k(\mathbf{y}^{\prime}) < f_k(\mathbf{y}) $ for at least one index $k$.

Let $\mathbf{A} =
\left[
a_{ij}
\right]_{i,j=1,\ldots,n} \in \mathcal{PCM}_n$  and
$\mathbf{w} = (w_1, w_2, \ldots, w_n)^{T}$ be a positive weight vector
($S = \mathbb{R}^n_{++},$ the positive orthant of the $n$-dimensional Euclidean space),
where $n$ is the number of criteria. Let us specify the
objective functions by $f_{ij}(\mathbf{w}) := \left| a_{ij} - \frac{w_i}{w_j} \right|$
for all $i \neq j$. We have $M = n^2-n$ objective functions.
\color{black}

\begin{definition} \label{def:DefinitionEfficient}  
 A positive weight vector  $\mathbf{w}$ is called \emph{ef{\kern0pt}f{\kern0pt}icient}
if no other positive weight vector
$\mathbf{w^{\prime}} = (w^{\prime}_1, w^{\prime}_2, \ldots, w^{\prime}_n)^{T}$
exists such that
\begin{align}
 \left|a_{ij} - \frac{w^{\prime}_i}{w^{\prime}_j} \right| &\leq \left|a_{ij} - \frac{w_i}{w_j} \right| \qquad \text{ for all } 1 \leq i,j \leq n,  \label{eqn:eff1}\\
 \left|a_{k{\ell}} - \frac{w^{\prime}_k}{w^{\prime}_{\ell}} \right| &<  \left|a_{k{\ell}} - \frac{w_k}{w_{\ell}} \right|  \qquad \text{ for some } 1 \leq k,\ell \leq n. \label{eqn:eff2}
\end{align}
\end{definition}

 A weight vector
$\mathbf{w}$ is called \emph{inef{\kern0pt}f{\kern0pt}icient} if it is not ef{\kern0pt}f{\kern0pt}icient.

\color{black} It follows from the def{\kern0pt}inition that an arbitrary
renormalization does not inf{\kern0pt}luence (in)ef{\kern0pt}f{\kern0pt}iciency.

\begin{remark}
Weight vector $\mathbf{w}$ is ef{\kern0pt}f{\kern0pt}icient if and only if $c\mathbf{w}$ is ef{\kern0pt}f{\kern0pt}icient for any $c > 0$.
\end{remark}
\color{black}

For a consistent PCM $a_{ij}= w^{EM}_i/w^{EM}_j$ for all $i,j=1,\dots,n$ \cite{Saaty1977}, which implies the following remark:
\begin{remark}
\label{ConsistentEfficient}
The principal right eigenvector $\mathbf{w}^{EM}$ is ef{\kern0pt}f{\kern0pt}icient for every consistent PCM.
\end{remark}

For inconsistent PCMs however, the principal right eigenvector can be inef{\kern0pt}f{\kern0pt}icient, found by Blanquero, Carrizosa and Conde \cite[Section 3]{BlanqueroCarrizosaConde2006}. This result was also reinforced by Bajwa, Choo and Wedley \cite{BajwaChooWedley2008}, by Conde and P\'erez \cite{CondePerez2010} and by Fedrizzi \cite{Fedrizzi2013}. Blanquero, Carrizosa and Conde \cite{BlanqueroCarrizosaConde2006} developed LP models to test whether a weight vector is ef{\kern0pt}f{\kern0pt}icient. Boz\'oki and F\"ul\"op \cite{BozokiFulop2015} further developed the models and provided algorithms to improve an inef{\kern0pt}f{\kern0pt}icient weight vector. Anholcer and F\"ul\"op \cite{AnholcerFulop2015} devised a new algorithm to derive an ef{\kern0pt}f{\kern0pt}icient solution from an inconsistent PCM.

Furthermore, Boz\'oki \cite{Bozoki2014} showed that the principal right eigenvector of a whole class of matrices, namely the parametric PCM
\[
\mathbf{A}(p,q) =
\begin{pmatrix}
   1     &     p    &     p    &   p     &  \ldots &    p    &  p   \\
 1/p     &     1    &     q    &   1     &  \ldots &    1    &  1/q     \\
 1/p     &    1/q   &     1    &   q     &  \ldots &    1    &  1       \\
  \vdots &   \vdots &   \vdots &  \ddots &         &  \vdots &   \vdots  \\
  \vdots &   \vdots &   \vdots &         &  \ddots &  \vdots &   \vdots  \\
 1/p     &     1    &     1    &   1     &  \ldots &    1    &   q       \\
 1/p     &     q    &     1    &   1     &  \ldots &    1/q    &   1
\end{pmatrix}\in \mathcal{PCM}_n,
\]
where $n \geq 4,$ $p > 0$ and $1 \neq q > 0,$ is inef{\kern0pt}f{\kern0pt}icient.

Several necessary and suf{\kern0pt}f{\kern0pt}icient conditions were examined by Blanquero, Carrizosa and Conde \cite{BlanqueroCarrizosaConde2006}, one of which is of crucial importance here. It uses a directed graph representation as follows:

\begin{definition} \label{def:DirectedGraphEfficient}  
Let $\mathbf{A} =
\left[
a_{ij}
\right]_{i,j=1,\ldots,n} \in \mathcal{PCM}_n$  and
$\mathbf{w} = (w_1, w_2, \ldots, w_n)^{T}$ be a positive weight vector.
 A directed graph $G=(V,\overrightarrow{E})_{\mathbf{A},\mathbf{w}}$ is def{\kern0pt}ined as follows:
$V=\{1,2,\ldots,n\} $ and
\[
 \overrightarrow{E} = \left\{ \text{arc(}  i \rightarrow j\text{)}   \left|  \frac{w_i}{w_j} \geq a_{ij}, i \neq j \right. \right\}.
\]
\end{definition}
It follows from Def{\kern0pt}inition \ref{def:DirectedGraphEfficient} that if $w_i/w_j=a_{ij}$, then there is a bidirected arc between nodes $i$ and $j$.
The result of Blanquero, Carrizosa and Conde using this representation is as follows:
\begin{theorem}[{\cite[Corollary 10]{BlanqueroCarrizosaConde2006}}] \label{thm:TheoremDirectedGraphEfficient} 
Let $\mathbf{A} \in \mathcal{PCM}_n$.
A weight vector $\mathbf{w}$ is ef{\kern0pt}f{\kern0pt}icient if and only if
$G=(V,\overrightarrow{E})_{\mathbf{A},\mathbf{w}}$
is  a strongly connected digraph, that is,
there exist directed paths from $i$ to $j$ and from $j$ to $i$ for all
pairs of  nodes  $i , j$.
\end{theorem}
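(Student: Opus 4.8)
The plan is to prove both implications by translating the distance inequalities of Definition \ref{def:DefinitionEfficient} into multiplicative constraints on the weights. A preliminary observation I would record is that reciprocity makes the arc set \emph{semicomplete}: for every pair $i\neq j$ exactly one of $w_i/w_j>a_{ij}$, $w_i/w_j<a_{ij}$, $w_i/w_j=a_{ij}$ holds, giving respectively the single arc $i\to j$, the single arc $j\to i$, or both arcs (since $a_{ji}=1/a_{ij}$). Hence at least one arc joins each pair of nodes, and a failure of strong connectivity must therefore take the combinatorial form exploited below.

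For the direction ``strongly connected $\Rightarrow$ efficient'', I would let $\mathbf{w}'$ be any weight vector satisfying \eqref{eqn:eff1} and set $t_k:=w'_k/w_k>0$. Along any arc $i\to j$ we have $w_i/w_j\geq a_{ij}$, so \eqref{eqn:eff1} gives $|a_{ij}-w'_i/w'_j|\leq w_i/w_j-a_{ij}$ and hence $w'_i/w'_j\leq w_i/w_j$, which rewrites as $t_i\leq t_j$. If $G$ is strongly connected, a directed path from $u$ to $v$ yields $t_u\leq t_v$ and a directed path from $v$ to $u$ yields $t_v\leq t_u$; thus all $t_k$ coincide, $\mathbf{w}'$ is a scalar multiple of $\mathbf{w}$, and every ratio $w'_i/w'_j$ equals $w_i/w_j$. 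This makes the strict inequality \eqref{eqn:eff2} impossible, so $\mathbf{w}$ is efficient.

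For the converse I would argue the contrapositive: if $G$ is not strongly connected, its condensation into strongly connected components is a nontrivial acyclic digraph and so possesses a sink component; let $S\subsetneq V$ be its vertex set, so that no arc leaves $S$. I would define $\mathbf{w}'$ by $w'_k=t\,w_k$ for $k\in S$ and $w'_k=w_k$ otherwise, with $t>1$. Only the ratios across the cut change: for $i\in S,\ j\notin S$ the absence of the (leaving) arc $i\to j$ forces $w_i/w_j<a_{ij}$, so increasing the ratio improves it; for $i\notin S,\ j\in S$ the absence of $j\to i$ forces $w_i/w_j>a_{ij}$, so the induced decrease again improves it. Choosing $t>1$ small enough keeps every altered ratio strictly on the same side of its target $a_{ij}$, so no position worsens while every cut pair (at least one exists) strictly improves; hence $\mathbf{w}$ is inefficient.

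The key idea, and the step I expect to carry the proof, is the substitution $t_k=w'_k/w_k$, which turns each arc inequality into the monotonicity relation $t_i\leq t_j$ and makes strong connectivity collapse $\mathbf{w}'$ onto $\mathbf{w}$. The main technical care lies in the constructive direction: one must verify that the finitely many strict inequalities $w_i/w_j<a_{ij}$ (and $w_i/w_j>a_{ij}$) admit a common factor $t>1$ under which no altered ratio overshoots its target, which is immediate but should be stated explicitly.
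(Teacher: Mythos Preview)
Your argument is correct in both directions. The paper, however, does not supply its own proof of this statement: Theorem~\ref{thm:TheoremDirectedGraphEfficient} is quoted from \cite[Corollary~10]{BlanqueroCarrizosaConde2006} and used as a black box, so there is nothing to compare against here. What you have written is essentially the standard proof of the Blanquero--Carrizosa--Conde criterion: the substitution $t_k=w'_k/w_k$ turning each arc into the order constraint $t_i\le t_j$ is exactly how the forward implication is usually handled, and the sink-component perturbation is the canonical construction for the converse. One minor wording point: in the converse you say ``no arc leaves $S$'' and then invoke ``the absence of $j\to i$'' for $j\in S$, $i\notin S$; this is consistent, but it may read more smoothly if you phrase both cross-cut cases uniformly as ``for $i\in S$, $j\notin S$ the arc $i\to j$ is absent, hence $w_i/w_j<a_{ij}$ strictly'', and then derive the $(j,i)$ inequality from reciprocity rather than restating the sink property.
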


The following numerical example provides an illustration for Theorem \ref{thm:TheoremDirectedGraphEfficient}.


\begin{example} \color{black} \label{example1}  
Let $ \mathbf{A} \in \mathcal{PCM}_4$ be as follows:
\[
\mathbf{A} =
\begin{pmatrix}
$\,$  1    $\,\,$ & $\,\,$   1     $\,\,$ & $\,\,$   4    $\,\,$ & $\,\,$     7  $\,$    \\
$\,$  1    $\,\,$ & $\,\,$   1     $\,\,$ & $\,\,$   7    $\,\,$ & $\,\,$     4  $\,$    \\
$\,$ 1/4   $\,\,$ & $\,\,$  1/7    $\,\,$ & $\,\,$   1    $\,\,$ & $\,\,$     3  $\,$   \\
$\,$ 1/7   $\,\,$ & $\,\,$  1/4    $\,\,$ & $\,\,$  1/3   $\,\,$ & $\,\,$     1   $\,$
\end{pmatrix}.
\]
The principal right eigenvector $\mathbf{w}^{EM}$ and the consistent
approximation of $\mathbf{A}$ generated by $\mathbf{w}^{EM}$ are as follows:
\[
\mathbf{w}^{EM} =
\begin{pmatrix}
0.39940672 \\
0.43144159 \\
0.10721105 \\
0.06194064
\end{pmatrix}, 
\quad 
\left[ \frac{w^{EM}_i}{w^{EM}_j} \right] =
\begin{pmatrix}
$\,$ 1        $\,\,$ & $\,\,$ 0.9257   $\,\,$ & $\,\,$ 3.7254   $\,\,$ & $\,\,$ 6.4482 $\,$    \\
$\,$ 1.0802   $\,\,$ & $\,\,$      1   $\,\,$ & $\,\,$ 4.0242   $\,\,$ & $\,\,$ 6.9654 $\,$    \\
$\,$ 0.2684   $\,\,$ & $\,\,$ 0.2485   $\,\,$ & $\,\,$       1  $\,\,$ & $\,\,$ 1.7309 $\,$   \\
$\,$ 0.1551   $\,\,$ & $\,\,$ 0.1436   $\,\,$ & $\,\,$ 0.5777   $\,\,$ & $\,\,$       1  $\,$
\end{pmatrix}.
\]

Apply Def{\kern0pt}inition \ref{def:DirectedGraphEfficient}, the directed graph $G=(V,\overrightarrow{E})_{\mathbf{A},\mathbf{w}^{EM}}$ corresponding to $\mathbf{A}$ and $\mathbf{w}^{EM}$ is drawn in Figure \ref{fig:example}. By Theorem \ref{thm:TheoremDirectedGraphEfficient}, $\mathbf{w}^{EM}$ is not ef{\kern0pt}f{\kern0pt}icient, because the corresponding digraph is not strongly connected: no arc leaves node 1.
\end{example}

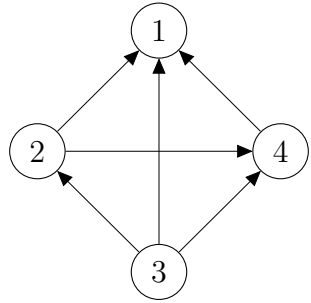
\begin{figure}[h]
\centering
\captionsetup{justification=centering}
\begin{tikzpicture}
\def \radius {1.6cm}

  \node (4) [draw, circle] at ({360/4 * (1 - 1)}:\radius) {$4$};
  \node (1) [draw, circle] at ({360/4 * (2 - 1)}:\radius) {$1$};
  \node (2) [draw, circle] at ({360/4 * (3 - 1)}:\radius) {$2$};
  \node (3) [draw, circle] at ({360/4 * (4 - 1)}:\radius) {$3$};

\draw[->,-,line width=0.25pt,arrows={-triangle 45}] (2) -- (1) node [midway, above, sloped] (TextNode) {};
\draw[->,line width=0.25pt,arrows={-triangle 45}] (3) -- (1) node [midway, above, sloped] (TextNode) {};
\draw[->,line width=0.25pt,arrows={-triangle 45}] (4) -- (1) node [midway, above,sloped] (TextNode) {};
\draw[->,line width=0.25pt,arrows={-triangle 45}] (3) -- (2) node [midway, above, sloped] (TextNode) {};
\draw[->,line width=0.25pt,arrows={-triangle 45}] (2) -- (4) node [midway, above, sloped] (TextNode) {};
\draw[->,line width=0.25pt,arrows={-triangle 45}] (3) -- (4) node [midway, above, sloped] (TextNode) {};

\node [below=2.2cm, align=flush center,text width=8cm] at (1)
        {
        };

\end{tikzpicture}
\caption{\small The principal right eigenvector in Example \ref{example1} is inef{\kern0pt}f{\kern0pt}icient, because the corresponding digraph is not strongly connected: no arc leaves node 1}
\label{fig:example}
\end{figure}


\bigskip
\color{black} It can be seen in a more constructive way why the
principal right eigenvector $\mathbf{w}^{EM}$ is
inef{\kern0pt}f{\kern0pt}icient. Increase the f{\kern0pt}irst coordinate
until 0.428844188, and keep the other coordinates unchanged:
\[
\mathbf{w}^{\prime} =
\begin{pmatrix}
0.428844188 \\
0.431441588 \\
0.107211052 \\
0.061940644
\end{pmatrix},
\quad
\left[ \frac{w^{\prime}_i}{w^{\prime}_j} \right] =
\begin{pmatrix}
$\,$ 1        $\,\,$ & $\,\,$ \textbf{0.9940}   $\,\,$ & $\,\,$   \textbf{4}      $\,\,$ & $\,\,$ \textbf{6.9235} $\,$    \\
$\,$ \textbf{1.0061 }  $\,\,$ & $\,\,$      1   $\,\,$ & $\,\,$ 4.0242   $\,\,$ & $\,\,$ 6.9654 $\,$    \\
$\,$ \textbf{1/4 }     $\,\,$ & $\,\,$ 0.2485   $\,\,$ & $\,\,$       1  $\,\,$ & $\,\,$ 1.7309 $\,$   \\
$\,$ \textbf{0.1444}   $\,\,$ & $\,\,$ 0.1436   $\,\,$ & $\,\,$ 0.5777   $\,\,$ & $\,\,$       1  $\,$
\end{pmatrix}.
\]
\color{black}
The approximation in the entries marked by bold became strictly better ((\ref{eqn:eff2}) holds in Def{\kern0pt}inition \ref{def:DefinitionEfficient}), while for all other entries the approximation remained the same ((\ref{eqn:eff1}) holds with equality in Def{\kern0pt}inition \ref{def:DefinitionEfficient}). \\

As it can be seen from Example \ref{example1} above, Theorem \ref{thm:TheoremDirectedGraphEfficient} is a powerful an applicable characterization of ef{\kern0pt}f{\kern0pt}iciency.\\

\begin{question}
\color{black} What is the necessary and suf{\kern0pt}f{\kern0pt}icient condition of
the principal right eigenvector's ef{\kern0pt}f{\kern0pt}iciency?
\end{question}

In the rest of the paper, special types of PCMs are considered.

A \emph{simple perturbed} PCM dif{\kern0pt}fers from a consistent PCM in only one element and its reciprocal, or in other words it can be made consistent by altering only one element (and its reciprocal). Thus, without loss of generality, a simple perturbed PCM can be written as
\[
\mathbf{A}_{\delta}  =
\left(\begin{array}{ccccc}
1 & \delta x_1 & x_2 & \dots & x_{n-1} \\ 1/(\delta x_1) & 1 & x_2/x_1 & \dots & x_{n-1}/x_1 \\
1/x_2 & x_1/x_2 & 1 & \dots & x_{n-1}/x_2 \\
\vdots & \vdots & \vdots & \ddots & \vdots \\
1/x_{n-1} & x_1/x_{n-1} & x_2/x_{n-1} & \dots & 1
\end{array} \right)\in \mathcal{PCM}_n,
\]
where $x_1,\dots,x_{n-1}>0$ and $0< \delta \neq 1$.

\begin{theorem}[{\cite[Theorem 3.1]{Abele-NagyBozoki2016}}] \label{thm:simple}
The principal right eigenvector of a simple perturbed pairwise comparison matrix is ef{\kern0pt}f{\kern0pt}icient.
\end{theorem}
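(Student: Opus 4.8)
The plan is to invoke the graph characterisation of efficiency (Theorem \ref{thm:TheoremDirectedGraphEfficient}) after reducing $\mathbf{A}_{\delta}$ to a canonical form by a diagonal similarity. Let $\mathbf{D}=\diag\left(1,1/x_1,\dots,1/x_{n-1}\right)$ be the diagonal matrix whose entries form the Perron eigenvector of the underlying consistent PCM, and set $\mathbf{B}=\mathbf{D}^{-1}\mathbf{A}_{\delta}\mathbf{D}$. A direct computation shows that $\mathbf{B}$ is the all-ones matrix with only the two perturbed entries changed, namely $b_{12}=\delta$, $b_{21}=1/\delta$ and $b_{ij}=1$ otherwise. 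If $\mathbf{v}$ is the Perron eigenvector of $\mathbf{B}$, then $\mathbf{w}^{EM}=\mathbf{D}\mathbf{v}$ is the (positive) Perron eigenvector of $\mathbf{A}_{\delta}$, and since $w^{EM}_i/w^{EM}_j=(d_i/d_j)(v_i/v_j)$ while the $(i,j)$ entry of $\mathbf{A}_{\delta}$ equals $(d_i/d_j)\,b_{ij}$, the inequality $w^{EM}_i/w^{EM}_j\ge a_{ij}$ holds if and only if $v_i/v_j\ge b_{ij}$. Hence the digraphs $G_{\mathbf{A}_{\delta},\mathbf{w}^{EM}}$ and $G_{\mathbf{B},\mathbf{v}}$ coincide, and it suffices to prove that the latter is strongly connected.

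Next I would exploit the symmetry of $\mathbf{B}$. Since rows and columns $3,\dots,n$ are identical, uniqueness of the Perron vector forces $v_3=\dots=v_n=:s>0$. Writing $\lambda$ for the principal eigenvalue (so $\lambda=\lambda_{\max}(\mathbf{A}_{\delta})\ge n>1$), the eigen-equations reduce to the three relations $(\lambda-1)v_1=\delta v_2+(n-2)s$, $(\lambda-1)v_2=v_1/\delta+(n-2)s$ and $\lambda s=v_1+v_2+(n-2)s$. Subtracting the third from the first two gives $(\delta-1)v_2=\lambda(v_1-s)$ and $(1/\delta-1)v_1=\lambda(v_2-s)$; assuming $\delta>1$ (the case $\delta<1$ is symmetric under interchanging indices $1$ and $2$), these yield $v_1>s>v_2$.

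The crucial step, and the main obstacle, is to locate $v_1/v_2$ relative to $\delta$, which decides the single arc between nodes $1$ and $2$. Eliminating $\lambda$ from the first two relations (multiply the first by $v_2$, the second by $v_1$, and subtract) produces the identity
\[
(\delta v_2-v_1)(\delta v_2+v_1)=\delta (n-2)s\,(v_1-v_2).
\]
For $\delta>1$ the right-hand side is strictly positive (as $s>0$ and $v_1>v_2$) and the factor $\delta v_2+v_1$ is positive, so $\delta v_2-v_1>0$, i.e.\ $v_1/v_2<\delta$. Consequently the arc $2\to 1$ is present (while $1\to 2$ is absent).

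Finally I would read off the remaining arcs of $G_{\mathbf{B},\mathbf{v}}$: because $b_{ij}=1$ off the perturbed pair, $v_1>s$ gives arcs $1\to k$ for every $k\ge 3$, $s>v_2$ gives arcs $k\to 2$ for every $k\ge 3$, and $v_3=\dots=v_n$ makes $\{3,\dots,n\}$ a bidirectionally connected clique. Together with the arc $2\to 1$, every node reaches every other one (node $1$ reaches each $k\ge 3$ and then $2$; node $2$ reaches $1$ and hence all; each $k\ge3$ reaches $2$, then $1$, then all), so the digraph is strongly connected. Theorem \ref{thm:TheoremDirectedGraphEfficient} then yields efficiency of $\mathbf{v}$ for $\mathbf{B}$, hence of $\mathbf{w}^{EM}$ for $\mathbf{A}_{\delta}$. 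I expect the algebraic identity above, together with the sign analysis that pins down $v_1/v_2<\delta$, to be the only genuinely delicate point; the graph-theoretic bookkeeping is routine.
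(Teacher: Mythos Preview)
Your argument is correct. The paper does not give its own proof of this statement; it quotes it from \cite{Abele-NagyBozoki2015}. However, comparing your proof to the methodology the present paper deploys for the double-perturbed extension (and to the Farkas-type explicit formulas the authors invoke there), your route is genuinely more elementary. The paper's machinery computes the characteristic polynomial via the matrix determinant lemma, then obtains explicit closed-form eigenvector components from columns of the adjugate, and finally checks each arc by substituting those formulas. You bypass all of that: the diagonal similarity is the same first move, but then you exploit the permutation symmetry of rows $3,\dots,n$ to collapse the eigen-system to three scalar relations, and the single identity $(\delta v_2-v_1)(\delta v_2+v_1)=\delta(n-2)s(v_1-v_2)$ settles the only non-obvious arc without ever writing $\lambda_{\max}$ in closed form. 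This is cleaner for the simple-perturbed case; the price is that it does not scale as readily to the double-perturbed situation, where the symmetry among the ``unperturbed'' rows is weaker and the adjugate-formula approach earns its keep.
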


Similarly, a \emph{double perturbed} PCM dif{\kern0pt}fers from a consistent PCM in two elements and their reciprocals, or in other words it can be made consistent by altering two elements (and their reciprocals). We have to dif{\kern0pt}ferentiate between three cases of double perturbed PCMs. Without loss of generality, every double perturbed PCM is equivalent to one of them. Also, we can suppose without the loss of generality, that from now on $n\geq 4$, because a PCM with $n=3$ is either simple perturbed or consistent. In Case 1, the perturbed elements are in the same row, and they are multiplied by $0<\delta\neq1$ and $0<\gamma\neq1$ respectively. In Case 2, they are in dif{\kern0pt}ferent rows, but this case needs to be further divided into two subcases (2A and 2B) due to algebraic issues. In Case 2A matrix size is $4\times4$, while in Case 2B matrix size is at least $5\times5$. Thus, these matrices take the following form:

Case 1:
\begin{equation}
\label{case1}
\mathbf{P}_{\gamma,\delta} = \begin{pmatrix}
    1 & \delta x_1 & \gamma x_2 & x_3 & \dots & x_{n-1} \\
    1/(\delta x_1) & 1 & x_2/x_1 & x_3/x_1 & \dots & x_{n-1}/x_1 \\
    1/(\gamma x_2) & x_1/x_2 & 1 & x_3/x_2 & \dots & x_{n-1}/x_2 \\
    1/x_3 & x_1/x_3 & x_2/x_3 & 1 & \dots & x_{n-1}/x_3 \\
    \vdots & \vdots & \vdots & \vdots & \ddots & \vdots \\
    1/x_{n-1} & x_1/x_{n-1} & x_2/x_{n-1} & x_3/x_{n-1} & \dots & 1 \end{pmatrix},
\end{equation}

Case 2A:
\begin{equation}
\label{case2}
\mathbf{Q}_{\gamma,\delta}= \begin{pmatrix}
    1 & \delta x_1 & x_2 & x_3 \\
    1/(\delta x_1) & 1 & x_2/x_1 & x_3/x_1 \\
    1/x_2 & x_1/x_2 & 1 & \gamma x_3/x_2 \\
    1/x_3 & x_1/x_3 & x_2/(\gamma x_3) & 1
    \end{pmatrix},
\end{equation}

Case 2B:
\begin{equation}
\label{case3}
\mathbf{R}_{\gamma,\delta}= \begin{pmatrix}
    1 & \delta x_1 & x_2 & x_3 & x_4 & \dots & x_{n-1} \\
    1/(\delta x_1) & 1 & x_2/x_1 & x_3/x_1 & x_4/x_1 & \dots & x_{n-1}/x_1 \\
    1/x_2 & x_1/x_2 & 1 & \gamma x_3/x_2 & x_4/x_2& \dots & x_{n-1}/x_2 \\
    1/x_3 & x_1/x_3 & x_2/(\gamma x_3) & 1 & x_4/x_3 & \dots & x_{n-1}/x_3 \\
    1/x_4 & x_1/x_4 & x_2/x_4 & x_3/x_4 & 1 & \dots & x_{n-1}/x_4 \\
    \vdots & \vdots & \vdots & \vdots & \vdots & \ddots & \vdots \\
    1/x_{n-1} & x_1/x_{n-1} & x_2/x_{n-1} & x_3/x_{n-1} & x_4/x_{n-1} & \dots & 1 \end{pmatrix}.
\end{equation}
Once again, $x_1,\dots,x_{n-1} >0$ and $0<\delta,\gamma \neq 1$.

\begin{remark}
\label{SimpleConsistent}
If either $\delta = 1$ or $\gamma = 1$ then the PCM is simple perturbed. If $\delta = \gamma = 1$ then the PCM is consistent.
\end{remark}

\begin{remark}
If $n=4$ and $\delta = \gamma$, then the PCM $\mathbf{P}_{\delta,\delta}$ in Case 1 is simple perturbed (multiply the single element $x_3$ in position (1,4) by $\delta$ to have a consistent PCM).
\end{remark}

Boz\'oki, F\"ul\"op and Poesz examined PCMs that can be made consistent by modifying at most 3 elements \cite{BozokiFulopPoesz2011}. Each of the three cases above corresponds to a graph: Case 1 corresponds to \cite[Fig. 6(b)]{BozokiFulopPoesz2011} while Case 2 corresponds to \cite[Fig. 6(a)]{BozokiFulopPoesz2011}. Cook and Kress \cite{CookKress1988} and Brunelli and Fedrizzi \cite{BrunelliFedrizzi2015} also examined the similar idea of comparing two PCMs that dif{\kern0pt}fer in only one element.

\section{Main result: \color{black} the principal right eigenvector of a double perturbed PCM is ef{\kern0pt}f{\kern0pt}icient \color{black}}
The main result of the paper is the extension of Theorem \ref{thm:simple} for double perturbed PCMs.

\begin{theorem}
The principal right eigenvector of a double perturbed PCM is ef{\kern0pt}f{\kern0pt}icient.
\end{theorem}
\begin{proof}
For the purpose of easy readability, only an outline of the proof is presented here. The detailed proof can be found in the Appendix.

\color{black} A method to acquire the explicit form of the
principal right eigenvector of a PCM when the perturbed elements
are in the same row or column has been developed by Farkas,
R\'ozsa and Stubnya \cite{FarkasRozsaStubnya1999}. Farkas
\cite{Farkas2007} writes the explicit formula for the simple
perturbed case. Our f{\kern0pt}irst goal is to extend the method for the
double perturbed case. Similar to \cite{Farkas2007}, the
characteristic polynomial is needed f{\kern0pt}irst. \color{black}
Proposition \ref{karpol1} covers Case 1 and Proposition
\ref{karpol2} covers Cases 2A and 2B.

Using the formulas for the characteristic polynomial, explicit formulas can be derived for the principal right eigenvector. Proposition \ref{Formula} presents these formulas. For each Case the formulas can be written in several dif{\kern0pt}ferent forms.

Utilizing the dif{\kern0pt}ferent explicit formulas for the principal right eigenvector a series of inequalities can be proved. These inequalities are presented in 28 lemmas (Lemmas \ref{lemma1a}--\ref{lemma3h}). The dif{\kern0pt}ferent forms of the formula for the principal right eigenvector make it possible to use the form most suited to each proof. Obtaining these inequalities makes it possible to prove the ef{\kern0pt}f{\kern0pt}iciency of the principal right eigenvector of a double perturbed PCM.

As per Theorem \ref{thm:TheoremDirectedGraphEfficient}, the strong connectedness of the digraph in Def{\kern0pt}inition \ref{def:DirectedGraphEfficient} needs to be shown. All possible digraphs are shown in Figures \ref{fig:case1}--\ref{fig:case3}. The direction of each arc (where applicable) is determined by the corresponding Lemma using Def{\kern0pt}inition \ref{def:DirectedGraphEfficient}, which is labeled on the arc itself. In the cases where there is a node named $i$, this represents the complete subgraph of the rest of the nodes (consisting of $n-3$ in Case 1 and $n-4$ nodes in Case 2B). In these subgraphs there are bidirected arcs between any two nodes, due to Lemmas \ref{lemma1j} and \ref{lemma3h}. This is a strongly connected subgraph, and for any f{\kern0pt}ixed $j \leq 3$ the direction of the arc between nodes $i$ and $j$ is the same for every $i \geq 4$ in Case 1 (see Lemmas \ref{lemma1e}, \ref{lemma1f}, \ref{lemma1h}, \ref{lemma1i}). Similarly, for any f{\kern0pt}ixed $j \leq 4$ the direction of the arc between nodes $i$ and $j$ is the same for every $i \geq 5$ in Case 2B (see Lemmas \ref{lemma3c}, \ref{lemma3d}, \ref{lemma3e}). Hence, it can be contracted into a single node when analyzing strong connectedness. Figures \ref{fig:case1}, \ref{fig:case2}, \ref{fig:case3} correspond to Cases 1, 2A, 2B respectively. For the strong connectedness of each digraph, it is suf{\kern0pt}f{\kern0pt}icient to f{\kern0pt}ind a directed cycle. Unchecked arcs are denoted by dashed lines in Figures \ref{fig:case1}--\ref{fig:case3}. The directed cycles are presented in Corollary \ref{cycle1}, \ref{cycle2} and \ref{cycle3} for Cases 1, 2A and 2B respectively.

The presence of a directed cycle implies strong connectedness for all of the digraphs, which implies ef{\kern0pt}f{\kern0pt}iciency in all cases by Theorem \ref{thm:TheoremDirectedGraphEfficient}.
\end{proof}

\begin{figure}[htbp]
\centering
\captionsetup{justification=centering}
\includegraphics[width=130mm]{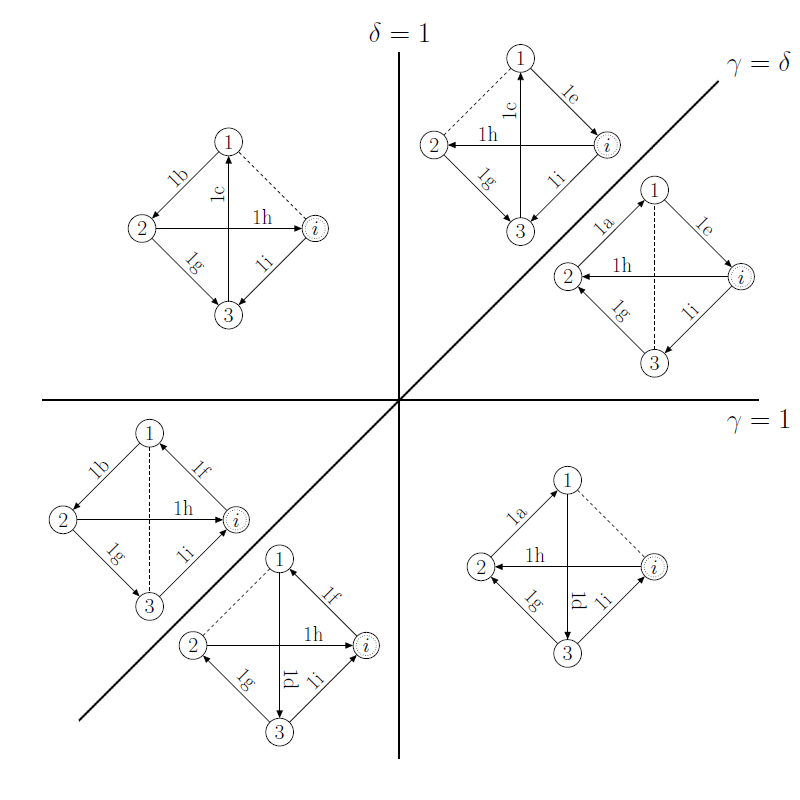}
\caption{\small The digraph of the principal right eigenvector in Case 1 is strongly connected, independently of the orientation of dashed arcs that have not been analyzed}
\label{fig:case1}
\end{figure}
\begin{figure}[htbp]
\centering
\captionsetup{justification=centering}
\includegraphics[width=100mm]{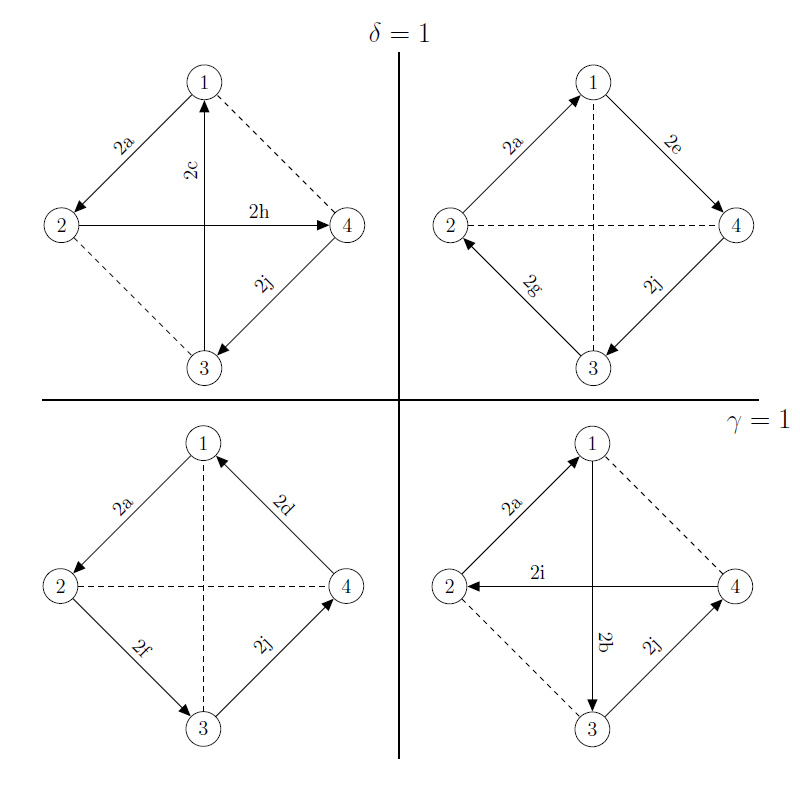}
\caption{\small The digraph of the principal right eigenvector in Case 2A is strongly connected, independently of the orientation of dashed arcs that have not been analyzed}
\label{fig:case2}
\end{figure}
\begin{figure}[htbp]
\centering
\captionsetup{justification=centering}
\includegraphics[width=100mm]{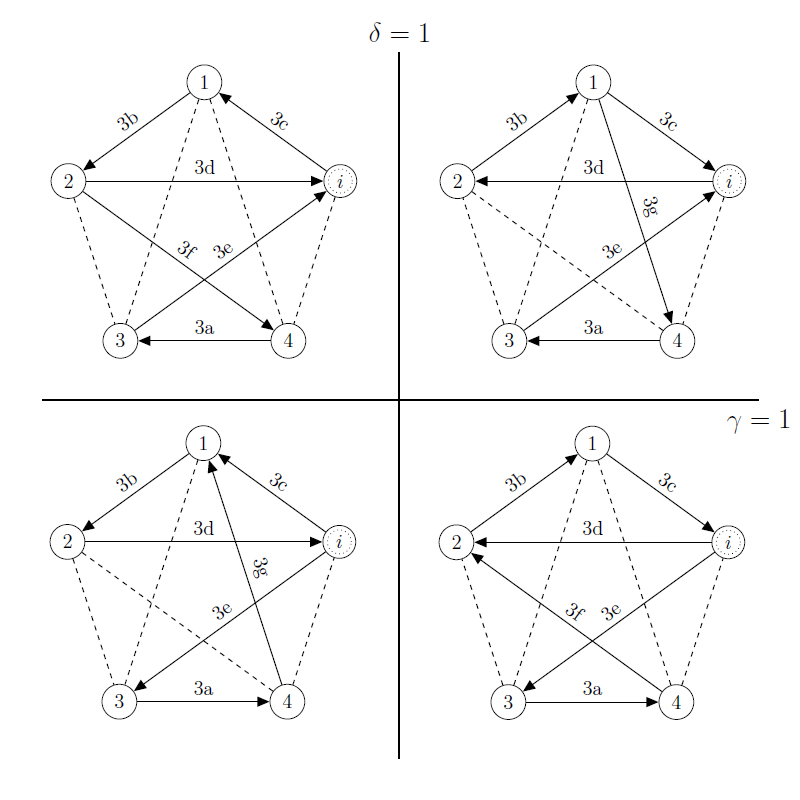}
\caption{\small The digraph of the principal right eigenvector in Case 2B is strongly connected, independently of the orientation of dashed arcs that have not been analyzed}
\label{fig:case3}
\end{figure}

\newpage
\section{Conclusions}
In the paper we used linear algebraic methods to derive explicit formulas for the principal eigenvector of double perturbed PCMs. We also used a necessary and suf{\kern0pt}f{\kern0pt}icient condition for ef{\kern0pt}f{\kern0pt}iciency which uses a directed graph representation (the weight vector is ef{\kern0pt}f{\kern0pt}icient if and only if this graph is strongly connected) developed by Blanquero, Carrizosa and Conde \cite{BlanqueroCarrizosaConde2006}. Double perturbed PCMs had to be  divided into three cases in order to get explicit formulas for every case. In all three cases the digraph has been studied arc by arc, however not all arcs had to be studied in order to determine strong connectedness. Utilizing all these tools, we have shown in the paper, that the often used eigenvector method produces an ef{\kern0pt}f{\kern0pt}icient weight vector in the case of double perturbed PCMs.
This is an extension of our earlier result for simple perturbed PCMs \cite{Abele-NagyBozoki2016}.

\color{black} A direct extension to the triple (or more) perturbed
case is not possible, since all PCMs of at least $4\times 4$ size
which are not (at most) double perturbed are triple perturbed, and
there are examples, e.g. Example 1, of inef{\kern0pt}f{\kern0pt}iciency of size
$4\times 4$. Thus, while in some cases (e.g. when all perturbed
elements are in dif{\kern0pt}ferent rows/columns) it may be possible to show
ef{\kern0pt}f{\kern0pt}iciency, for all triple perturbed PCMs this is impossible.
Furthermore, a triple perturbed PCM can be equivalent to f{\kern0pt}ive
separate basic cases (see \cite[Fig. 7]{BozokiFulopPoesz2011}),
which may need to be further divided into more subcases, making
the ef{\kern0pt}f{\kern0pt}iciency analysis of triple perturbed PCMs dif{\kern0pt}f{\kern0pt}icult. A full
characterization of the ef{\kern0pt}f{\kern0pt}iciency of the principal right
eigenvector is still an open question, and a possible subject of
future research. \color{black}

\section*{Acknowledgements}
The authors are grateful to the anonymous reviewers for their
constructive remarks. Andr\'as Farkas (\'Obuda University, Budapest)
and J\'anos F\"ul\"op (Institute for Computer Science and Control,
Hungarian Academy of Sciences (MTA SZTAKI) and \'Obuda University,
Budapest) are greatly acknowledged for their valuable comments.
Research was supported in part by the Hungarian Scientif{\kern0pt}ic Research Fund (OTKA)
grant K 111797.
S.~Boz\'oki acknowledges the support of the J\'anos Bolyai Research Fellowship no.~BO/00154/16/3.

\bibliographystyle{abbrv}

\newpage
\section*{Appendix  }
Let $\mathbf{D}=\diag(1,1/x_1,\dots,1/x_{n-1})$, and let $\mathbf{e}=(1,\dots,1)^T$. For $\mathbf{A} \in \mathcal{PCM}_n$,
\begin{equation}
\label{hasonlosag}
\mathbf{ee}^T-\mathbf{U}_i\mathbf{V}_i^T=\mathbf{D}^{-1}\mathbf{AD}
\end{equation}
holds for $i=1,2$. \color{black} For $i=1$, $\mathbf{A}$ has form
(\ref{case1}) (Case 1) and \color{black}
\begin{equation}
\label{UV1}
\mathbf{U}_1=\begin{pmatrix}
0 & 1 \\
1-1/\delta & 0 \\
1-1/\gamma & 0 \\
0 & 0 \\
\vdots & \vdots \\
0 & 0
\end{pmatrix}\in \mathbb{R}^{n \times 2}, \qquad
\mathbf{V}_1=\begin{pmatrix}
1 & 0 \\
0& 1-\delta \\
0 & 1-\gamma \\
0 & 0 \\
\vdots & \vdots \\
0 & 0
\end{pmatrix}\in \mathbb{R}^{n \times 2}.
\end{equation}
\color{black} For $i=2$, $\mathbf{A}$ has form (\ref{case2}) or
(\ref{case3}) (Case 2) and\color{black}
\begin{equation}
\label{UV2}
\mathbf{U}_2=\begin{pmatrix}
0 & 1 & 0 & 0\\
1-1/\delta & 0 & 0 & 0 \\
0 & 0 & 0 & 1 \\
0 & 0 & 1-1/\gamma & 0 \\
0 & 0 & 0 & 0\\
\vdots & \vdots & \vdots & \vdots \\
0 & 0 & 0 & 0
\end{pmatrix} \in \mathbb{R}^{n \times 4},
\mathbf{V}_2=\begin{pmatrix}
1 & 0 & 0 & 0 \\
0& 1-\delta & 0 & 0 \\
0 & 0 & 1 & 0 \\
0 & 0 & 0 & 1-\gamma \\
0 & 0 & 0 & 0\\
\vdots & \vdots & \vdots & \vdots \\
0 & 0 & 0 & 0
\end{pmatrix} \in \mathbb{R}^{n \times 4}.
\end{equation}

\begin{lemma}[Matrix determinant lemma, \cite{Harville2008}]
\label{MatDet}
If $\mathbf{A}\in\mathbb{R}^{n\times n}$ is invertible, and $\mathbf{U},\mathbf{V}\in\mathbb{R}^{n\times m}$, then
\[ \det(\mathbf{A}+\mathbf{UV}^T) = \det(\mathbf{I}_m + \mathbf{V}^T\mathbf{A}^{-1}\mathbf{U})\det(\mathbf{A}), \]
where $\mathbf{I}_m$ denotes the identity matrix of size $m \times m$.
\end{lemma}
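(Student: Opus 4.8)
The plan is to prove this classical identity by the standard block-matrix (Schur complement) argument, which requires no case analysis on the rank of $\mathbf{U}\mathbf{V}^T$. First I would introduce the $(n+m)\times(n+m)$ block matrix
\[
\mathbf{M} = \begin{pmatrix} \mathbf{A} & -\mathbf{U} \\ \mathbf{V}^T & \mathbf{I}_m \end{pmatrix}
\]
and evaluate $\det(\mathbf{M})$ in two different ways by exhibiting two block-triangular factorizations. The whole content of the lemma is then extracted from the multiplicativity of the determinant together with the elementary rule that the determinant of a block-triangular matrix is the product of the determinants of its diagonal blocks.

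Since $\mathbf{A}$ is invertible, the first factorization eliminates the lower-left block,
\[
\mathbf{M} = \begin{pmatrix} \mathbf{I}_n & \mathbf{0} \\ \mathbf{V}^T\mathbf{A}^{-1} & \mathbf{I}_m \end{pmatrix} \begin{pmatrix} \mathbf{A} & -\mathbf{U} \\ \mathbf{0} & \mathbf{I}_m + \mathbf{V}^T\mathbf{A}^{-1}\mathbf{U} \end{pmatrix},
\]
where the left factor has unit diagonal blocks and hence determinant $1$, so that $\det(\mathbf{M}) = \det(\mathbf{A})\det(\mathbf{I}_m + \mathbf{V}^T\mathbf{A}^{-1}\mathbf{U})$. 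For the second factorization I would instead eliminate the upper-right block using the invertible $(2,2)$ block $\mathbf{I}_m$,
\[
\mathbf{M} = \begin{pmatrix} \mathbf{I}_n & -\mathbf{U} \\ \mathbf{0} & \mathbf{I}_m \end{pmatrix} \begin{pmatrix} \mathbf{A}+\mathbf{U}\mathbf{V}^T & \mathbf{0} \\ \mathbf{V}^T & \mathbf{I}_m \end{pmatrix},
\]
which yields $\det(\mathbf{M}) = \det(\mathbf{A}+\mathbf{U}\mathbf{V}^T)\det(\mathbf{I}_m) = \det(\mathbf{A}+\mathbf{U}\mathbf{V}^T)$. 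Equating the two expressions for $\det(\mathbf{M})$ gives the claimed formula.

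There is no genuine obstacle in this argument; the only thing to check is that both products reproduce $\mathbf{M}$ block by block, which is a one-line verification in each case. The single conceptual point worth flagging is where the hypothesis is used: the inverse $\mathbf{A}^{-1}$ appears only in the first factorization, so invertibility of $\mathbf{A}$ enters exactly and only where expected, while the second factorization needs no such assumption. Since the statement is quoted verbatim from Harville, one could alternatively omit the proof and simply cite the reference, but the block-factorization derivation above is short and self-contained, and it makes transparent the role that the invertibility of $\mathbf{A}$ plays.
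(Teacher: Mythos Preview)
Your block-matrix (Schur complement) argument is correct, and both factorizations check out entry by entry. Note, however, that the paper does not actually prove this lemma at all: it is stated with a citation to Harville and then used as a tool, so there is no ``paper's own proof'' to compare against. Your remark that one could simply cite the reference is exactly what the authors do; your self-contained derivation is a welcome addition but not something the paper itself supplies.
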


\begin{lemma}[Sherman--Morrison formula, \cite{Sherman1950}]
\label{SM}
Let $\mathbf{A} \in \mathbb{R}^{n \times n}$, $\mathbf{u},\mathbf{v}\in\mathbb{R}^{n}$. If $\mathbf{A}$ is invertible and $1+\mathbf{v}^T\mathbf{A}^{-1}\mathbf{u} \neq 0$, then $\left(\mathbf{A}+\mathbf{uv}^T\right)^{-1}$ exists, and
$$
\left(\mathbf{A}+\mathbf{uv}^T\right)^{-1} = \mathbf{A}^{-1} - \frac{1}{1+\mathbf{v}^T\mathbf{A}^{-1}\mathbf{u}}\,\mathbf{A}^{-1}\mathbf{uv}^T\mathbf{A}^{-1}.
$$
\end{lemma}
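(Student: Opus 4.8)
The plan is to prove the identity by \emph{direct verification}: rather than derive the formula from scratch, I will exhibit the stated right-hand side as a genuine two-sided inverse of $\mathbf{A}+\mathbf{u}\mathbf{v}^T$. Set the \emph{scalar} $\beta = \mathbf{v}^T\mathbf{A}^{-1}\mathbf{u}$ and write the candidate inverse as $\mathbf{B} = \mathbf{A}^{-1} - \frac{1}{1+\beta}\mathbf{A}^{-1}\mathbf{u}\mathbf{v}^T\mathbf{A}^{-1}$; the hypothesis $1+\beta\neq 0$ is precisely what makes $\mathbf{B}$ well defined. The single structural fact driving the whole argument is that $\beta$ is a number, so in any product the factor $\mathbf{v}^T\mathbf{A}^{-1}\mathbf{u}$ may be pulled out and absorbed into the scalar coefficient $\frac{1}{1+\beta}$.

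First I would compute $(\mathbf{A}+\mathbf{u}\mathbf{v}^T)\mathbf{B}$. Expanding yields four terms; using $\mathbf{A}\mathbf{A}^{-1}=\mathbf{I}$ and collecting everything over the common right factor $\mathbf{u}\mathbf{v}^T\mathbf{A}^{-1}$ gives
$$(\mathbf{A}+\mathbf{u}\mathbf{v}^T)\mathbf{B} = \mathbf{I} + \left(1 - \frac{1}{1+\beta} - \frac{\beta}{1+\beta}\right)\mathbf{u}\mathbf{v}^T\mathbf{A}^{-1},$$
where the cross term $\mathbf{u}\mathbf{v}^T\mathbf{A}^{-1}\mathbf{u}\mathbf{v}^T\mathbf{A}^{-1}$ contributes the coefficient $-\beta/(1+\beta)$ exactly because $\mathbf{v}^T\mathbf{A}^{-1}\mathbf{u}=\beta$ is scalar. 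The bracketed number equals $\frac{(1+\beta)-1-\beta}{1+\beta}=0$, so the product collapses to $\mathbf{I}$.

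Next I would run the symmetric computation for $\mathbf{B}(\mathbf{A}+\mathbf{u}\mathbf{v}^T)$; by the same scalar cancellation it likewise reduces to $\mathbf{I}$. Since $\mathbf{B}$ is then a two-sided inverse, $\mathbf{A}+\mathbf{u}\mathbf{v}^T$ is invertible and its inverse is $\mathbf{B}$, establishing both the existence claim and the explicit formula at once.

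I do not expect a genuine obstacle here: the proof is a short algebraic verification, and the only things to watch are the bookkeeping of the four expanded terms and the recognition that $\mathbf{v}^T\mathbf{A}^{-1}\mathbf{u}$ is a scalar. One could alternatively secure invertibility from Lemma \ref{MatDet} with $m=1$, since $\det(\mathbf{A}+\mathbf{u}\mathbf{v}^T)=(1+\beta)\det(\mathbf{A})\neq 0$; but that route still leaves the explicit form of the inverse to be checked, so the direct verification above is the most economical path.
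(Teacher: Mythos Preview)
Your verification is correct and is the standard textbook argument for the Sherman--Morrison formula. Note, however, that the paper does not supply its own proof of this lemma: it is stated with a citation to \cite{ShermanMorrison1950} and used as a known result, so there is no in-paper proof to compare against.
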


Let $\mathbf{A}\in \mathcal{PCM}_n$ be a double perturbed PCM and $\mathbf{U}_i,\mathbf{V}_i$ be as in (\ref{hasonlosag}). Let the matrix $\mathbf{K}_\mathbf{A}(\lambda) \in \mathbb{R}^{n \times n}$ be def{\kern0pt}ined as follows:
\[ \mathbf{K}_\mathbf{A}(\lambda) = \lambda\mathbf{I}+\mathbf{U}_i\mathbf{V}_i^T-\mathbf{ee}^T = \lambda\mathbf{I} - \mathbf{D}^{-1}\mathbf{AD}, \]
where $\mathbf{I}$ denotes $\mathbf{I}_n$, $\mathbf{e}=(1,\dots,1)^T \in \mathbb{R}^{n}$, $i=1$ in Case 1, $i=2$ in Case 2, and the second equation follows from (\ref{hasonlosag}).

\begin{lemma}
\label{perturbedK}
The characteristic polynomial of the double perturbed PCM $\mathbf{A}\in \mathcal{PCM}_n$ is
\[ p_\mathbf{A}(\lambda)=(-1)^n\det(\mathbf{K}_\mathbf{A}(\lambda)). \]
\end{lemma}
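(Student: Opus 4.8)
The plan is to exploit the fact that $\mathbf{K}_\mathbf{A}(\lambda)$ has already been rewritten, through the second equality in its definition together with identity (\ref{hasonlosag}), as $\lambda\mathbf{I} - \mathbf{D}^{-1}\mathbf{A}\mathbf{D}$. Everything then reduces to the similarity invariance of the characteristic polynomial, so no new structural input is required.

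First I would record that $\mathbf{D} = \diag(1, 1/x_1, \dots, 1/x_{n-1})$ is a positive diagonal matrix, hence invertible, so $\mathbf{D}^{-1}\mathbf{A}\mathbf{D}$ is similar to $\mathbf{A}$. Consequently $\det(\lambda\mathbf{I} - \mathbf{D}^{-1}\mathbf{A}\mathbf{D}) = \det(\lambda\mathbf{I} - \mathbf{A})$, which one sees directly by factoring $\det(\mathbf{D}^{-1}(\lambda\mathbf{I} - \mathbf{A})\mathbf{D}) = \det(\mathbf{D}^{-1})\det(\lambda\mathbf{I} - \mathbf{A})\det(\mathbf{D})$ and cancelling $\det(\mathbf{D}^{-1})\det(\mathbf{D}) = 1$.

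Next I would substitute the definition of $\mathbf{K}_\mathbf{A}(\lambda)$: since $\mathbf{K}_\mathbf{A}(\lambda) = \lambda\mathbf{I} - \mathbf{D}^{-1}\mathbf{A}\mathbf{D}$, the previous step yields $\det(\mathbf{K}_\mathbf{A}(\lambda)) = \det(\lambda\mathbf{I} - \mathbf{A})$. Finally, using the sign convention $p_\mathbf{A}(\lambda) = \det(\mathbf{A} - \lambda\mathbf{I})$ and pulling a factor $-1$ out of each of the $n$ rows, I would conclude $\det(\mathbf{A} - \lambda\mathbf{I}) = (-1)^n\det(\lambda\mathbf{I} - \mathbf{A}) = (-1)^n\det(\mathbf{K}_\mathbf{A}(\lambda))$, which is exactly the claim.

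There is no genuine obstacle here; the statement is essentially a bookkeeping identity. The only point demanding care is the factor $(-1)^n$, which is purely an artefact of the sign convention chosen for the characteristic polynomial (writing it as $\det(\mathbf{A} - \lambda\mathbf{I})$ rather than the monic $\det(\lambda\mathbf{I} - \mathbf{A})$). I would stress that the matrix determinant lemma and the Sherman--Morrison formula recalled above are \emph{not} needed for this identity; they are the tools reserved for the subsequent explicit evaluation of $\det(\mathbf{K}_\mathbf{A}(\lambda))$, where the low-rank structure $\mathbf{U}_i\mathbf{V}_i^T$ is actually exploited.
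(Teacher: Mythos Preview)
Your proposal is correct and follows essentially the same route as the paper: both arguments reduce to the similarity invariance of the characteristic polynomial via $\det(\mathbf{D})\det(\mathbf{D}^{-1})=1$, together with the sign flip $\det(\mathbf{A}-\lambda\mathbf{I})=(-1)^n\det(\lambda\mathbf{I}-\mathbf{A})$. The only cosmetic difference is that the paper passes explicitly through the form $\lambda\mathbf{I}+\mathbf{U}_i\mathbf{V}_i^T-\mathbf{ee}^T$ in its chain of equalities, whereas you invoke directly the already-established identity $\mathbf{K}_\mathbf{A}(\lambda)=\lambda\mathbf{I}-\mathbf{D}^{-1}\mathbf{A}\mathbf{D}$.
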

\begin{proof}
As before, $i=1$ in Case 1 and $i=2$ in Case 2.
\begin{align*}
p_\mathbf{A}(\lambda) &= \det(\mathbf{A} - \lambda \mathbf{I}) \\
 &= (-1)^n\det(\lambda \mathbf{I} -\mathbf{A}) \\
 &= (-1)^n\det\left(\lambda \mathbf{I} + \mathbf{D}\left(\mathbf{U}_i\mathbf{V}_i^T - \mathbf{ee}^T\right)\mathbf{D}^{-1}\right) \\
 &= (-1)^n\det\left(\mathbf{D}\left(\lambda \mathbf{I} + \mathbf{U}_i\mathbf{V}_i^T - \mathbf{ee}^T\right)\mathbf{D}^{-1}\right) \\
 &= (-1)^n\det(\mathbf{D})\det\left(\lambda \mathbf{I} + \mathbf{U}_i\mathbf{V}_i^T - \mathbf{ee}^T\right)\det\left(\mathbf{D}^{-1}\right) \\
 &= (-1)^n\det(\mathbf{K}_{\mathbf{A}}(\lambda)).
\end{align*}
\end{proof}

\begin{lemma}
\label{li-eet}
$\det(\lambda\mathbf{I}-\mathbf{ee}^T) = \lambda^n-n\lambda^{n-1}.$
\end{lemma}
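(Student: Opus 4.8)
The plan is to recognize $\mathbf{ee}^T$ as a rank-one matrix and apply the matrix determinant lemma (Lemma \ref{MatDet}) with $m=1$. Writing $\lambda\mathbf{I}-\mathbf{ee}^T = \lambda\mathbf{I} + \mathbf{e}(-\mathbf{e})^T$, I would take $\mathbf{A}=\lambda\mathbf{I}$, $\mathbf{U}=\mathbf{e}$ and $\mathbf{V}=-\mathbf{e}$. For $\lambda\neq 0$ the matrix $\lambda\mathbf{I}$ is invertible with $(\lambda\mathbf{I})^{-1}=\tfrac{1}{\lambda}\mathbf{I}$ and $\det(\lambda\mathbf{I})=\lambda^n$, so the lemma yields
\[
\det(\lambda\mathbf{I}-\mathbf{ee}^T) = \Bigl(1 + (-\mathbf{e})^T\tfrac{1}{\lambda}\mathbf{e}\Bigr)\lambda^n = \Bigl(1 - \tfrac{n}{\lambda}\Bigr)\lambda^n,
\]
where I have used $\mathbf{e}^T\mathbf{e}=n$. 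This simplifies to $\lambda^n - n\lambda^{n-1}$, the desired form.

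Next I would observe that so far the identity has been established only for $\lambda\neq 0$, since the matrix determinant lemma requires $\mathbf{A}=\lambda\mathbf{I}$ to be invertible. Both sides of the claimed equality are polynomials in $\lambda$ (the left-hand side is the determinant of a matrix whose entries are affine in $\lambda$), and they coincide on the infinite set $\{\lambda : \lambda\neq 0\}$; hence they are equal as polynomials, in particular at $\lambda=0$ as well.

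The computation is entirely routine, so there is no genuine obstacle here beyond bookkeeping: the only points deserving a moment's care are the sign convention in choosing $\mathbf{V}=-\mathbf{e}$ and the short polynomial-identity argument needed to cover the excluded value $\lambda=0$. An equally brief alternative would be to note directly that $\mathbf{ee}^T$ has the eigenvalue $n$ with eigenvector $\mathbf{e}$ and the eigenvalue $0$ with multiplicity $n-1$, giving the characteristic polynomial $(\lambda-n)\lambda^{n-1}$; I would nonetheless prefer the matrix-determinant-lemma derivation, as it reuses the tool just introduced in Lemma \ref{MatDet} and sets the pattern for the more involved determinant computations that follow.
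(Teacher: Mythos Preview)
Your proof is correct and essentially identical to the paper's: both apply the matrix determinant lemma with $\mathbf{A}=\lambda\mathbf{I}$ and the rank-one perturbation $-\mathbf{ee}^T$ for $\lambda\neq 0$, the only cosmetic differences being which of $\mathbf{U},\mathbf{V}$ carries the minus sign and that the paper dispatches $\lambda=0$ by direct inspection rather than by a polynomial-identity argument.
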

\begin{proof}
If $\lambda=0$, then both sides of the equation are $0$.
If $\lambda\neq 0$, apply Lemma \ref{MatDet} with $m=1$, $\mathbf{A}=\lambda \mathbf{I}$, $\mathbf{U}=-\mathbf{e}$, $\mathbf{V}=\mathbf{e}$:
\[
\det(\lambda\mathbf{I}-\mathbf{ee}^T)=(1-\mathbf{e}^T(\lambda\mathbf{I})^{-1}\mathbf{e})\det(\lambda\mathbf{I})=\lambda^n-n\lambda^{n-1}.
\]
\end{proof}

\begin{lemma}
\label{inv}
If $\lambda \neq 0$ and $\lambda \neq n$, then $\left(\lambda \mathbf{I} - \mathbf{ee}^T\right)^{-1}$ exists, and
$$
\left(\lambda \mathbf{I} - \mathbf{ee}^T\right)^{-1} = \frac{1}{\lambda\left(\lambda-n\right)}\mathbf{ee}^T + \frac{1}{\lambda} \mathbf{I}.
$$
\end{lemma}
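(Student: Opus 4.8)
The plan is to verify the proposed inverse directly by multiplication, since the candidate formula is already given in the statement. I would set $\mathbf{M} = \frac{1}{\lambda(\lambda-n)}\mathbf{ee}^T + \frac{1}{\lambda}\mathbf{I}$ and compute $(\lambda\mathbf{I}-\mathbf{ee}^T)\mathbf{M}$, aiming to show it equals $\mathbf{I}_n$. The only non-routine fact needed is the identity $\mathbf{e}^T\mathbf{e} = n$, equivalently $\mathbf{ee}^T\mathbf{ee}^T = n\,\mathbf{ee}^T$, which lets the rank-one terms collapse.

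Concretely, I would expand the product term by term. The diagonal part gives $(\lambda\mathbf{I})\bigl(\tfrac{1}{\lambda}\mathbf{I}\bigr) = \mathbf{I}$, which already supplies the desired identity matrix, so all remaining terms must cancel. The three remaining contributions are $(\lambda\mathbf{I})\bigl(\tfrac{1}{\lambda(\lambda-n)}\mathbf{ee}^T\bigr) = \tfrac{1}{\lambda-n}\mathbf{ee}^T$, then $-\mathbf{ee}^T\bigl(\tfrac{1}{\lambda}\mathbf{I}\bigr) = -\tfrac{1}{\lambda}\mathbf{ee}^T$, and finally $-\mathbf{ee}^T\bigl(\tfrac{1}{\lambda(\lambda-n)}\mathbf{ee}^T\bigr)$. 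For the last term I would use $\mathbf{ee}^T\mathbf{ee}^T = n\,\mathbf{ee}^T$ to rewrite it as $-\tfrac{n}{\lambda(\lambda-n)}\mathbf{ee}^T$. Collecting the coefficients of $\mathbf{ee}^T$ gives $\tfrac{1}{\lambda-n} - \tfrac{1}{\lambda} - \tfrac{n}{\lambda(\lambda-n)}$, and putting this over the common denominator $\lambda(\lambda-n)$ yields $\tfrac{\lambda - (\lambda-n) - n}{\lambda(\lambda-n)} = 0$, so the product is exactly $\mathbf{I}$.

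Existence of the inverse is handled up front by Lemma \ref{li-eet}: since $\det(\lambda\mathbf{I}-\mathbf{ee}^T) = \lambda^{n-1}(\lambda-n)$, the matrix is invertible precisely when $\lambda\neq 0$ and $\lambda\neq n$, which are the stated hypotheses. This justifies that the $\mathbf{M}$ we found is the genuine two-sided inverse (and I would note the product $\mathbf{M}(\lambda\mathbf{I}-\mathbf{ee}^T)$ is identical by symmetry, since both $\mathbf{I}$ and $\mathbf{ee}^T$ commute with one another). I do not anticipate a real obstacle here; the whole argument is a short verification, and the only place to be careful is keeping track of the scalar coefficients on $\mathbf{ee}^T$ so that they sum to zero.
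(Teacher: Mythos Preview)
Your proof is correct. The paper takes a different route: it simply invokes the Sherman--Morrison formula (Lemma~\ref{SM}) with $\mathbf{A}=\lambda\mathbf{I}$, $\mathbf{u}=-\mathbf{e}$, $\mathbf{v}=\mathbf{e}$, which both establishes invertibility (since $1+\mathbf{v}^T\mathbf{A}^{-1}\mathbf{u}=1-n/\lambda\neq 0$ under the hypotheses) and produces the formula in one stroke. Your approach is more elementary in that it avoids citing Sherman--Morrison altogether and instead verifies the given formula by direct multiplication, appealing to Lemma~\ref{li-eet} for the determinant to confirm invertibility. The trade-off is that the paper's method \emph{derives} the inverse, whereas yours only \emph{checks} it; but since the candidate is already stated in the lemma, verification is entirely adequate and arguably cleaner here.
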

\begin{proof}
Apply the Sherman--Morrison formula (Lemma \ref{SM}) with $\mathbf{A}=\lambda \mathbf{I}$, $\mathbf{u}=-\mathbf{e}$, $\mathbf{v}=\mathbf{e}$.
\end{proof}

\begin{lemma}
\label{UVVU}
Let $\mathbf{U},\mathbf{V}\in\mathbb{R}^{n\times m}$ be arbitrary matrices. If $\lambda \neq 0$ and $\lambda \neq n$, then
$$\det\left(\lambda \mathbf{I}_n + \mathbf{UV}^T - \mathbf{ee}^T\right) = \left(\lambda^n-n\lambda^{n-1}\right)\det\left(\mathbf{I}_m + \frac{1}{\lambda(\lambda - n)}\mathbf{V}^T\mathbf{ee}^T\mathbf{U} + \frac{1}{\lambda}\mathbf{V}^T\mathbf{U}\right).$$
\end{lemma}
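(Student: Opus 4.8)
The plan is to recognize the left-hand side as a rank-$m$ update of the matrix $\mathbf{B} := \lambda\mathbf{I}_n - \mathbf{ee}^T$ and to invoke the matrix determinant lemma, whose output is then simplified using the two preceding lemmas. Under the hypotheses $\lambda \neq 0$ and $\lambda \neq n$, Lemma \ref{inv} guarantees that $\mathbf{B}$ is invertible, so the matrix determinant lemma (Lemma \ref{MatDet}) applies directly to $\det(\mathbf{B} + \mathbf{UV}^T)$, with $\mathbf{B}$ playing the role of the invertible matrix there.

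First I would write, by Lemma \ref{MatDet} with this choice of $\mathbf{B}$,
\[
\det\left(\lambda\mathbf{I}_n + \mathbf{UV}^T - \mathbf{ee}^T\right) = \det\left(\mathbf{B} + \mathbf{UV}^T\right) = \det\left(\mathbf{I}_m + \mathbf{V}^T\mathbf{B}^{-1}\mathbf{U}\right)\det(\mathbf{B}).
\]
Next I would evaluate the scalar factor by Lemma \ref{li-eet}, obtaining $\det(\mathbf{B}) = \det(\lambda\mathbf{I}_n - \mathbf{ee}^T) = \lambda^n - n\lambda^{n-1}$, which is already the prefactor appearing on the right-hand side of the claim.

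It then remains to simplify the inner determinant. Here I would substitute the explicit inverse supplied by Lemma \ref{inv}, namely $\mathbf{B}^{-1} = \frac{1}{\lambda(\lambda-n)}\mathbf{ee}^T + \frac{1}{\lambda}\mathbf{I}_n$, and distribute $\mathbf{V}^T(\,\cdot\,)\mathbf{U}$ across the two summands to get
\[
\mathbf{V}^T\mathbf{B}^{-1}\mathbf{U} = \frac{1}{\lambda(\lambda-n)}\mathbf{V}^T\mathbf{ee}^T\mathbf{U} + \frac{1}{\lambda}\mathbf{V}^T\mathbf{U}.
\]
Inserting this into the factored determinant above reproduces the stated identity verbatim.

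I do not expect a genuine obstacle: the argument is essentially an assembly of the three prior lemmas, and the only point requiring care is verifying that the invertibility hypothesis of the matrix determinant lemma holds, which is exactly what the conditions $\lambda \neq 0$ and $\lambda \neq n$ secure through Lemma \ref{inv}. Everything else is a routine linear-algebraic substitution, so no separate lemma or case analysis should be needed.
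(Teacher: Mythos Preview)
Your proposal is correct and follows essentially the same approach as the paper: apply the matrix determinant lemma (Lemma~\ref{MatDet}) with the invertible matrix $\lambda\mathbf{I}_n-\mathbf{ee}^T$, use Lemma~\ref{li-eet} for its determinant, and substitute the explicit inverse from Lemma~\ref{inv}. The paper's proof is precisely this three-step assembly, so there is nothing to add.
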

\begin{proof}
Apply Lemma \ref{MatDet} with $\mathbf{A}=\lambda \mathbf{I}_n-\mathbf{ee}^T$. According to Lemma \ref{inv}, $\mathbf{A}$ is invertible. Utilizing Lemmas \ref{MatDet}, \ref{li-eet} and \ref{inv} the following equations hold:
\begin{align*}
& \det\left(\left(\lambda \mathbf{I}_n - \mathbf{ee}^T\right)+\mathbf{UV}^T\right) \\
&= \det\left( \mathbf{I}_m + \mathbf{V}^T\left(\lambda \mathbf{I}_n - \mathbf{ee}^T\right)^{-1}\mathbf{U}\right)\det\left(\lambda \mathbf{I}_n - \mathbf{ee}^T\right) \\
&= \det\left(\mathbf{I}_m + \frac{1}{\lambda(\lambda - n)}\mathbf{V}^T\mathbf{ee}^T\mathbf{U} + \frac{1}{\lambda}\mathbf{V}^T\mathbf{U}\right)\left(\lambda^n-n\lambda^{n-1}\right).
\end{align*}
\end{proof}

We can write the characteristic polynomial of double perturbed PCMs in explicit form.

\begin{proposition}
\label{karpol1}
Let $n \geq 4$. The characteristic polynomial of a double perturbed PCM in form (\ref{case1}) (Case 1) is
$$
p_\mathbf{P}(\lambda) =  (-1)^n\lambda^{n-3} \left( \lambda^3 - n\lambda^2 - \left(\frac{\gamma}{\delta} + \frac{\delta}{\gamma}\right) - (n-3)\left(\gamma+\delta+\frac{1}{\gamma}+\frac{1}{\delta}\right) + 4n -10 \right).
$$
\end{proposition}
\begin{proof}
Lemma \ref{perturbedK} implies that
$$
p_\mathbf{P}(\lambda) = (-1)^n\det(\mathbf{K}_\mathbf{P}(\lambda)) = (-1)^n\det\left(\lambda \mathbf{I} + \mathbf{U}_1{\mathbf{V}_1}^T - \mathbf{ee}^T\right),
$$
where $\mathbf{U}_1$ and $\mathbf{V}_1$ are def{\kern0pt}ined by (\ref{UV1}).
Suppose that $\lambda\neq n$ and $\lambda\neq 0$. According to Lemma \ref{UVVU}
\begin{align*}
p_\mathbf{P}(\lambda) &= (-1)^n\left(\lambda^n-n\lambda^{n-1}\right)\det\left(\mathbf{I}_2 + \frac{1}{\lambda(\lambda - n)}{\mathbf{V}_1}^T\mathbf{ee}^T{\mathbf{U}_1} + \frac{1}{\lambda}{\mathbf{V}_1}^T{\mathbf{U}_1}\right)\\
 &= (-1)^n\left(\lambda^n-n\lambda^{n-1}\right)\det(\mathbf{S})\\
&= (-1)^n\lambda^{n-3} \left( \lambda^3 - n\lambda^2 - \left(\frac{\gamma}{\delta} + \frac{\delta}{\gamma}\right) - (n-3)\left(\gamma+\delta+\frac{1}{\gamma}+\frac{1}{\delta}\right) + 4n -10 \right),
\end{align*}
where
$$
\mathbf{S}=\left(
\begin {array}{cc}
1+\frac{2-1/\delta-1/\gamma}{\lambda(\lambda-n)} & \frac{1}{\lambda(\lambda-n)} + \frac{1}{\lambda}\\
\frac{(2-\delta-\gamma)(1-1/\delta)+(2-\delta-\gamma)(1-1/\gamma)}{\lambda(\lambda-n)}+\frac{(1-\delta)(1-1/\delta)}{\lambda}+\frac{(1-\gamma)(1-1/\gamma)}{\lambda} & 1+\frac{2-\delta-\gamma}{\lambda(\lambda-n)}
\end {array}\right).
$$
A polynomial of degree $n$ is uniquely determined by $n+1$ points, and we have calculated $p_\mathbf{P}(\lambda)$ in all but two points, which completes the proof.
\end{proof}

\begin{proposition}
\label{karpol2}
Let $n \geq 4$. The characteristic polynomial of a double perturbed PCM in form (\ref{case3}) (Case 2B) is
$$
p_\mathbf{R}(\lambda) = (-1)^n \lambda^{n-5}\bigg(\lambda^5-n\lambda^4-(n-2)\left(\gamma+\delta+\frac{1}{\gamma}+\frac{1}{\delta}-4\right)\lambda^2-c\lambda-(n-4)c\bigg),
$$
where
$$
c = \frac{(\gamma-1)^2(\delta-1)^2}{\gamma\delta}.
$$
Furthermore, the characteristic polynomial of a double perturbed PCM in form (\ref{case2}) (Case 2A), $p_\mathbf{Q}(\lambda)$ is a special case of $p_\mathbf{R}(\lambda)$ with $n=4$. Namely,
$$
p_\mathbf{Q}(\lambda) = \lambda^4-4\lambda^3-2\left(\gamma+\delta+\frac{1}{\gamma}+\frac{1}{\delta}-4\right)\lambda-\frac{(\gamma-1)^2(\delta-1)^2}{\gamma\delta}.
$$
\end{proposition}
\begin{proof}
Lemma \ref{perturbedK} implies that
$$
p_\mathbf{R}(\lambda) = (-1)^n\det(\mathbf{K}_\mathbf{R}(\lambda)) = (-1)^n\det\left(\lambda \mathbf{I} + \mathbf{U}_2{\mathbf{V}_2}^T - \mathbf{ee}^T\right),
$$
where $\mathbf{U}_2$ and $\mathbf{V}_2$ are def{\kern0pt}ined by (\ref{UV2}).
Suppose that $\lambda\neq n$ and $\lambda\neq 0$. According to Lemma \ref{UVVU}
\begin{align*}
p_\mathbf{R}(\lambda) &= (-1)^n\left(\lambda^n-n\lambda^{n-1}\right)\det\left(\mathbf{I}_4 + \frac{1}{\lambda(\lambda - n)}{\mathbf{V}_2}^T\mathbf{ee}^T{\mathbf{U}_2} + \frac{1}{\lambda}{\mathbf{V}_2}^T{\mathbf{U}_2}\right)\\
 &= (-1)^n\left(\lambda^n-n\lambda^{n-1}\right)\det(\mathbf{T})\\
 &= (-1)^n \lambda^{n-5}\bigg(\lambda^5-n\lambda^4-(n-2)\left(\gamma+\delta+\frac{1}{\gamma}+\frac{1}{\delta}-4\right)\lambda^2-c\lambda-(n-4)c\bigg),
\end{align*}
where
$$
\mathbf{T}= \left( \begin {array}{cccc}
1+\frac{1-1/\delta}{\lambda(\lambda-n)} & \frac{1}{\lambda(\lambda-n)} + \frac{1}{\lambda} & \frac{1-1/\gamma}{\lambda(\lambda-n)} & \frac{1}{\lambda(\lambda-n)}\\
\frac{(1-\delta)(1-1/\delta)}{\lambda(\lambda-n)} + \frac{(1-\delta)(1-1/\delta)}{\lambda} & 1+\frac{1-\delta}{\lambda(\lambda-n)}& \frac{(1-\delta)(1-1/\gamma)}{\lambda(\lambda-n)} & \frac{1-\delta}{\lambda(\lambda-n)}\\
\frac{1-1/\delta}{\lambda(\lambda-n)} & \frac{1}{\lambda(\lambda-n)} & 1+\frac{1-1/\gamma}{\lambda(\lambda-n)} & \frac{1}{\lambda(\lambda-n)}+\frac{1}{\lambda}\\
\frac{(1-\gamma)(1-1/\delta)}{\lambda(\lambda-n)} & \frac{1-\gamma}{\lambda(\lambda-n)} & \frac{(1-\gamma)(1-1/\gamma)}{\lambda(\lambda-n)} + \frac{(1-\gamma)(1-1/\gamma)}{\lambda} & 1+\frac{1-\gamma}{\lambda(\lambda-n)}
\end {array} \right)
$$
and
$$
c = \frac{(\gamma-1)^2(\delta-1)^2}{\gamma\delta}.
$$
Again, a polynomial of degree $n$ is uniquely determined by $n+1$ points, and we have calculated $p_\mathbf{R}(\lambda)$ in all but two points, which completes the proof.
The case $n=4$ is analogous, and
\[
p_\mathbf{Q}(\lambda) = \lambda^4-4\lambda^3-2\left(\gamma+\delta+\frac{1}{\gamma}+\frac{1}{\delta}-4\right)\lambda-\frac{(\gamma-1)^2(\delta-1)^2}{\gamma\delta}
\]
is resulted in.
\end{proof}

\begin{proposition}
\label{Formula}
The principle right eigenvector of a double perturbed PCM can be written in explicit ways.

In Case 1 ($\gamma$ and $\delta$ are in the same row), the formulas for the principal right eigenvector are the following:
\begin{align}
\label{egysor-1} 
\mathbf{w}^{EM} &=
\begin{pmatrix}
  \delta \gamma \lambda (\lambda-n+1)                 \\
\frac{1}{x_1} \left[ \gamma \lambda - (n-2)\gamma + \delta +(n-3)\delta \gamma \right] \\
\frac{1}{x_2} \left[ \delta\lambda  -(n-2)\delta + \gamma +(n-3)\delta\gamma \right] \\
\frac{1}{x_3} \left[ \gamma+\delta+\delta\gamma\lambda-2\delta\gamma  \right] \\
\vdots \\
\frac{1}{x_{i-1}} \left[ \gamma+\delta+\delta\gamma\lambda-2\delta\gamma  \right] \\
\vdots \\
\frac{1}{x_{n-1}} \left[ \gamma+\delta+\delta\gamma\lambda-2\delta\gamma  \right] \\
\end{pmatrix},\\
\label{egysor-2} 
\mathbf{w}^{EM} &=
 \begin{pmatrix}
x_1 \gamma\lambda \left[ \delta\lambda-(n-2)\delta+\gamma+n-3 \right] \\
\gamma\lambda^3 - (n-1)\gamma\lambda^2 -(n-3)(\gamma^2-2\gamma+1) \\
\frac{x_1}{x_{2}} \left[
\gamma\lambda^2-\gamma\lambda+\delta\lambda +(n-3)(\delta\gamma-\delta-\gamma+1)
\right] \\
\frac{x_1}{x_{3}} \left[
\gamma\lambda^2-\gamma\lambda-\gamma+\delta+\delta\gamma\lambda-\delta\gamma+\gamma^2
\right] \\
\vdots \\
\frac{x_1}{x_{i-1}} \left[
\gamma\lambda^2-\gamma\lambda-\gamma+\delta+\delta\gamma\lambda-\delta\gamma+\gamma^2
 \right] \\
\vdots \\
\frac{x_1}{x_{n-1}} \left[
\gamma\lambda^2-\gamma\lambda-\gamma+\delta+\delta\gamma\lambda-\delta\gamma+\gamma^2
\right] \\
\end{pmatrix},\\
\label{egysor-3}  
\mathbf{w}^{EM} &=
 \begin{pmatrix}
x_2 \delta\lambda \left[ \delta+\gamma\lambda-(n-2)\gamma+n-3 \right] \\
\frac{x_2}{x_{1}} \left[
\delta\lambda^2-\delta\lambda+\gamma\lambda +(n-3)(\delta\gamma-\delta-\gamma+1)
\right] \\
\delta\lambda^3 -(n-1)\delta\lambda^2 -(n-3)(\delta^2 - 2\delta + 1) \\
\frac{x_2}{x_{3}} \left[
\delta\lambda^2-\delta\lambda+\gamma-\delta+\delta^2+\delta\gamma\lambda-\delta\gamma
\right] \\
\vdots \\
\frac{x_2}{x_{i-1}} \left[
\delta\lambda^2-\delta\lambda+\gamma-\delta+\delta^2+\delta\gamma\lambda-\delta\gamma
 \right] \\
\vdots \\
\frac{x_2}{x_{n-1}} \left[
\delta\lambda^2-\delta\lambda+\gamma-\delta+\delta^2+\delta\gamma\lambda-\delta\gamma
\right] \\
\end{pmatrix},\\
\label{egysor-4} 
\mathbf{w}^{EM} &=
 \begin{pmatrix}
x_3 \delta\gamma\lambda (\delta+\gamma+\lambda-2)  \\
\frac{x_3}{x_{1}} \left[
\delta\gamma\lambda^2-\delta\gamma\lambda+\gamma^2+\gamma\lambda-\gamma-\delta\gamma+\delta
\right] \\
\frac{x_3}{x_{2}} \left[
\delta\gamma\lambda^2-\delta\gamma\lambda-\delta\gamma+\gamma+\delta^2+\delta\lambda-\delta
\right] \\
\delta\gamma\lambda^2-4\delta\gamma+\gamma+\delta +\delta^2\gamma+\gamma^2\delta \\
\frac{x_3}{x_4} \left[
\delta\gamma\lambda^2-4\delta\gamma+\gamma+\delta +\delta^2\gamma+\gamma^2\delta
 \right] \\
\vdots \\
\frac{x_3}{x_{n-1}} \left[
\delta\gamma\lambda^2-4\delta\gamma+\gamma+\delta +\delta^2\gamma+\gamma^2\delta
\right] \\
\end{pmatrix}.
\end{align}
\color{black}
Formulas (\ref{egysor-1})--(\ref{egysor-4}) give the same principal right eigenvector, up to a scalar multiplier.\\
\color{black}

In Case 2A ($\gamma$ and $\delta$ are in dif{\kern0pt}ferent rows, and matrix size is $4\times4$) the formulas take the following form:
\begin{align}
\label{4x4-1}  
\mathbf{w}^{EM} &=
\begin{pmatrix}
\delta(\lambda^3\gamma -3\lambda^2\gamma -1 +2\gamma -\gamma^2) \\
\frac{1}{x_{1}} \left[
\lambda^2\gamma-2\lambda\gamma+\delta+2\lambda\delta\gamma-2\delta\gamma+\delta\gamma^2
\right] \\
\frac{1}{x_{2}} \gamma \left[
\gamma+\lambda-1+\delta\lambda^2-2\lambda\delta+\delta+\lambda\delta\gamma-\delta\gamma
\right] \\
\frac{1}{x_{3}} \left[
1+\lambda\gamma-\gamma+\lambda\delta-\delta+\delta\gamma\lambda^2-2\lambda\delta\gamma+\delta\gamma
\right]
\end{pmatrix},\\
\label{4x4-2} 
\mathbf{w}^{EM} &=
 \begin{pmatrix}
x_{1} [ \delta\gamma\lambda^2-2\lambda\delta\gamma+1+2\lambda\gamma-2\gamma+\gamma^2 ] \\
\lambda^3\gamma-3\lambda^2\gamma-1+2\gamma-\gamma^2 \\
\frac{x_{1}}{x_{2}} \gamma \left[
\lambda\gamma+\lambda^2-2\lambda-\gamma+1+\lambda\delta-\delta+\delta\gamma
\right] \\
\frac{x_{1}}{x_{3}}  \left[
\lambda+\lambda^2\gamma-2\lambda\gamma-1+\gamma+\delta+\lambda\delta\gamma-\delta\gamma
\right]
\end{pmatrix},\\
\label{4x4-3} 
\mathbf{w}^{EM} &=
 \begin{pmatrix}
x_2 \delta(1+\lambda\gamma-\gamma)(\delta+\lambda-1)  \\
\frac{x_{2}}{x_{1}}  \left[
1+\lambda\gamma-\gamma+\lambda\delta-\delta+\delta\gamma\lambda^2-2\lambda\delta\gamma+\delta\gamma
\right] \\
\gamma(\delta\lambda^3 - 3\delta\lambda^2 - 1 + 2\delta - \delta^2)  \\
\frac{x_{2}}{x_{3}}  \left[
2\lambda\delta\gamma+\delta\lambda^2-2\lambda\delta-2\delta\gamma+\gamma+\delta^2\gamma
\right]
\end{pmatrix},\\
\label{4x4-4} 
\mathbf{w}^{EM} &=
 \begin{pmatrix}
x_3 \delta( \lambda\gamma+\lambda^2-2\lambda-\gamma+1+\lambda\delta-\delta+\delta\gamma  )  \\
\frac{x_{3}}{x_{1}}  \left[
\gamma+\lambda-1+\delta\lambda^2-2\lambda\delta+\delta+\lambda\delta\gamma-\delta\gamma
\right] \\
\frac{x_{3}}{x_{2}}  \left[
2\lambda\delta+\delta\gamma\lambda^2-2\lambda\delta\gamma-2\delta+1+\delta^2
\right] \\
\delta\lambda^3-3\delta\lambda^2-1+2\delta-\delta^2
\end{pmatrix}.
\end{align}
\color{black}
Again, formulas (\ref{4x4-1})--(\ref{4x4-4}) give the same principal right eigenvector, up to a scalar multiplier.\\
\color{black}

In Case 2B ($\gamma$ and $\delta$ are in dif{\kern0pt}ferent rows, and matrix size is at least $5\times5$) the formulas are the following:

\begin{align}
\label{5x5-1} 
\mathbf{w}^{EM} &=
\begin{pmatrix}
\delta\lambda [ \lambda^3\gamma - (n-1)\lambda^2\gamma - (n-3)(\gamma^2 - 2\gamma + 1) ] \\
\scriptstyle \frac{1}{x_1} \left\{ \lambda^3\gamma-(n-2)\lambda^2\gamma
+(n-2)\delta\gamma\lambda^2 + [\lambda\delta + (n-4)(\delta-1)
](\gamma^2- 2\gamma+ 1 )
 \right\}\\
\frac{1}{x_2} \gamma\lambda \left[
\gamma+\lambda-1+\delta\lambda^2
-2\lambda\delta+\delta+\lambda\delta\gamma
-\delta\gamma
 \right] \\
\frac{1}{x_3} \lambda \left[
 1+\lambda\gamma-\gamma+\lambda\delta  -\delta+\delta\gamma\lambda^2  -2\lambda\delta\gamma+\delta\gamma \right] \\
\scriptstyle \frac{1}{x_4} \left[
\gamma^2-2\gamma+\lambda^2\gamma+1+\lambda\delta-\delta\gamma\lambda^2-2\lambda\delta\gamma+\lambda\gamma^2\delta
+\lambda^3\delta\gamma-\delta+2\delta\gamma-\delta\gamma^2   \right] \\
\vdots \\
\scriptstyle \frac{1}{x_{n-1}} \left[
\gamma^2-2\gamma+\lambda^2\gamma+1+\lambda\delta-\delta\gamma\lambda^2-2\lambda\delta\gamma+\lambda\gamma^2\delta
+\lambda^3\delta\gamma-\delta+2\delta\gamma-\delta\gamma^2   \right] \\
\end{pmatrix},\\
\label{5x5-2} 
\mathbf{w}^{EM} &=
 \begin{pmatrix} \scriptstyle
x_1 [ \lambda^3\delta\gamma - (n-2)\delta\gamma\lambda^2
 - (n-4)\delta(\gamma-1)^2
  + \lambda + (n-2)\lambda^2\gamma - 2\lambda\gamma
 + \lambda\gamma^2
 + (n-4)(\gamma-1)^2 ] \\
\lambda(\lambda^3\gamma -
(n-1)\lambda^2\gamma
-(n-3)(\gamma-1)^2 ) \\
\frac{x_1}{x_2}\gamma\lambda
( \lambda\gamma+\lambda^2-2\lambda-\gamma+1+\delta\lambda-\delta+\delta\gamma ) \\
\frac{x_1}{x_3}\lambda
(\lambda+\lambda^2\gamma-2\lambda\gamma-1+\gamma+\delta+\lambda\delta\gamma-\delta\gamma)  \\
\scriptstyle
\frac{x_1}{x_4}
(\lambda\gamma^2-2\lambda\gamma+\lambda^3\gamma +\lambda-\gamma^2+2\gamma-\lambda^2\gamma-1
+\delta-2\delta\gamma+\delta\gamma^2+\delta\gamma\lambda^2) \\
\vdots \\
\scriptstyle
\frac{x_1}{x_{n-1}}
(\lambda\gamma^2-2\lambda\gamma+\lambda^3\gamma
+\lambda-\gamma^2+2\gamma-\lambda^2\gamma-1
+\delta-2\delta\gamma+\delta\gamma^2+\delta\gamma\lambda^2)
\end{pmatrix},\\
\label{5x5-3} 
\mathbf{w}^{EM} &=
 \begin{pmatrix}
x_2 \delta\lambda (1+\lambda\gamma-\gamma)(\delta+\lambda-1)  \\
\frac{x_2}{x_1}
\lambda(1+\lambda\gamma-\gamma)(1+\delta\lambda-\delta)  \\
\gamma\lambda \left[ \lambda^3\delta - (n-1)\delta\lambda^2 -(n-3)(\delta-1)^2 \right] \\
\scriptstyle \frac{x_2}{x_3} [
\delta\lambda^3 - (n-2)\delta\lambda^2 (1- \gamma)
-2\lambda\delta\gamma +  2(n- 4 )\delta(1-\gamma)  +\lambda\gamma
+\delta^2\lambda\gamma +(n-4)(-1+ \gamma - \delta^2 + \delta^2\gamma ) ] \\
\frac{x_2}{x_4}
(1+\lambda\gamma-\gamma)(\delta\lambda^2+1-2\delta+\delta^2) \\
\vdots \\
\frac{x_2}{x_{n-1}}
(1+\lambda\gamma-\gamma)(\delta\lambda^2+1-2\delta+\delta^2)
\end{pmatrix},\\
\label{5x5-4} 
\mathbf{w}^{EM} &=
 \begin{pmatrix}
x_3 \delta\lambda (\lambda\gamma+\lambda^2-2\lambda-\gamma+1+\delta\lambda-\delta+\delta\gamma)
  \\
\frac{x_3}{x_1}
\lambda(\gamma+\lambda-1)(1+\delta\lambda-\delta)
\\
\scriptstyle \frac{x_3}{x_2}
[ \lambda^3\delta\gamma - (n-2)\delta\lambda^2(\gamma - 1) - 2\delta\lambda +
 2(n- 4 ) \delta(\gamma - 1) + \lambda + \delta^2\lambda +(n-4)( 1 - \gamma  + \delta^2 - \delta^2\gamma )]  \\
 \lambda[\delta\lambda^3 - (n-1)\delta\lambda^2  - (n-3)(\delta-1)^2 ]\\
 \scriptstyle
\frac{x_3}{x_4} (
\delta\gamma\lambda^2+\lambda^3\delta-\delta\lambda^2-2\delta\lambda
-2\delta\gamma+2\delta-1+\gamma+\lambda+\delta^2\lambda-\delta^2+\delta^2\gamma ) \\
\vdots \\
\scriptstyle
\frac{x_3}{x_{n-1}}
( \delta\gamma\lambda^2+\lambda^3\delta-\delta\lambda^2-2\delta\lambda
-2\delta\gamma+2\delta-1+\gamma+\lambda+\delta^2\lambda-\delta^2+\delta^2\gamma )
\end{pmatrix},\\
\label{5x5-5} 
\mathbf{w}^{EM} &=
 \begin{pmatrix}
{x_4} \delta\lambda(\gamma^2-2\gamma+\lambda^2\gamma+1)(\delta+\lambda-1) \\
\frac{x_4}{x_1}
\lambda(\gamma^2-2\gamma+\lambda^2\gamma+1)(1+\delta\lambda-\delta)  \\
\scriptstyle \frac{x_4}{x_2}
\gamma\lambda(\delta\gamma\lambda^2+\lambda^3\delta-\delta\lambda^2
-2\delta\lambda-2\delta\gamma+2\delta-1+\gamma+\lambda+\delta^2\lambda-\delta^2+\delta^2\gamma)  \\
\scriptstyle \frac{x_4}{x_3}
\lambda(\delta\lambda^2+\lambda^3\delta\gamma-\delta\gamma\lambda^2
-2\lambda\delta\gamma-2\delta+2\delta\gamma-\gamma+1+\lambda\gamma+\delta^2+\delta^2\lambda\gamma-\delta^2\gamma) \\
(\gamma^2-2\gamma+\lambda^2\gamma+1)(\delta\lambda^2+1-2\delta+\delta^2) \\
\frac{x_4}{x_5}
(\gamma^2-2\gamma+\lambda^2\gamma+1)(\delta\lambda^2+1-2\delta+\delta^2) \\
\vdots \\
\frac{x_4}{x_{n-1}}
(\gamma^2-2\gamma+\lambda^2\gamma+1)(\delta\lambda^2+1-2\delta+\delta^2) \\
\end{pmatrix}.
\end{align}
\color{black}
Again, formulas (\ref{5x5-1})--(\ref{5x5-5}) give the same principal right eigenvector, up to a scalar multiplier.\\
\color{black}
\end{proposition}

\begin{proof}
The proof is similar to that of the eigenvector formulas (24)--(26) in \cite{Farkas2007}.
Let us consider Case 1. Let $\mathbf{D}=\diag(1,1/x_1,\dots,1/x_{n-1})$, and let $\mathbf{K}_{\mathbf{P}}(\lambda)=\lambda \mathbf{I} + \mathbf{U}_1\mathbf{V}_1^{T} - \mathbf{ee}^{T}$, with $\mathbf{U}_1$ and $\mathbf{V}_1$ as def{\kern0pt}ined by (\ref{UV1}). Since $\mathbf{D}$ is invertible, every column of the one rank matrix $\mathbf{D}\adj(\mathbf{K}_{\mathbf{P}}(\lambda_{\max}))\mathbf{D}^{-1}$ is a Perron eigenvector of $\mathbf{P}$.

For Case 2, replace $\mathbf{U}_1$ by $\mathbf{U}_2$ and $\mathbf{V}_1$ by $\mathbf{V}_2$ as def{\kern0pt}ined by (\ref{UV2}).
\end{proof}


\begin{remark}
\label{AppRemark}
Formulas (\ref{egysor-1})--(\ref{5x5-5}) are positive.
\end{remark}
\begin{proof}
It is suf{\kern0pt}f{\kern0pt}icient to prove the positivity of any arbitrary element of each formula, because the Perron--Frobenius theorem then guarantees the positivity for the vectors as well. The conclusions of the proofs generally follow from $x_i>0 \ $for all $i=1,\dots,n$, $\gamma,\delta>0$ and $\lambda>n\geq 4$ (or $n\geq 5$ in Case 2B). The proof for each formula follows:

Formula (\ref{egysor-1}):
Positivity is apparent for $w^{EM}_1$.

Formula (\ref{egysor-2}):
\begin{align*}
w^{EM}_1 &= x_1 \gamma\lambda \left[ \delta\lambda-(n-2)\delta+\gamma+n-3 \right] \\ &=
x_1 \gamma\lambda \left[ \delta(\lambda-n+2)+\gamma+(n-3) \right].
\end{align*}

Formula (\ref{egysor-3}):
\begin{align*}
w^{EM}_1 &= x_2 \delta\lambda \left[ \delta+\gamma\lambda-(n-2)\gamma+n-3 \right] \\ &=
x_2 \delta\lambda \left[ \delta+\gamma(\lambda-n+2)+(n-3) \right].
\end{align*}

Formula (\ref{egysor-4}):
Positivity is apparent for $w^{EM}_1$.

Formula (\ref{4x4-1}):
\begin{align*}
w^{EM}_2 &= \frac{1}{x_{1}} \left[ \lambda^2\gamma-2\lambda\gamma+\delta+2\lambda\delta\gamma-2\delta\gamma+\delta\gamma^2 \right] \\ &=
\frac{1}{x_{1}} \left[ \lambda\gamma(\lambda-2)+\delta+2\delta\gamma(\lambda-1)+\delta\gamma^2 \right].
\end{align*}

Formula (\ref{4x4-2}):
\begin{align*}
w^{EM}_1 &= x_{1} [ \delta\gamma\lambda^2-2\lambda\delta\gamma+1+2\lambda\gamma-2\gamma+\gamma^2 ] \\ &=
x_{1} [ \delta\gamma\lambda(\lambda-2)+1+2\gamma(\lambda-1)+\gamma^2 ].
\end{align*}

Formula (\ref{4x4-3}):
\begin{align*}
w^{EM}_1 &= x_2 \delta(1+\lambda\gamma-\gamma)(\delta+\lambda-1) \\ &=
x_2 \delta[1+\gamma(\lambda-1)][\delta+(\lambda-1)].
\end{align*}

Formula (\ref{4x4-4}):
\begin{align*}
w^{EM}_3 &= \frac{x_{3}}{x_{2}}  \left[ 2\lambda\delta+\delta\gamma\lambda^2-2\lambda\delta\gamma-2\delta+1+\delta^2 \right] \\ &= \frac{x_{3}}{x_{2}}  \left[ 2\delta(\lambda-1)+\delta\gamma\lambda(\lambda-2)+1+\delta^2 \right].
\end{align*}

From here on in the proof, $n\geq 5$.

Formula (\ref{5x5-1}):
$w^{EM}_3$ in formula (\ref{5x5-1}) is the same as $\lambda w^{EM}_3$ in formula (\ref{4x4-1}), which is already proven to be positive.

Formula (\ref{5x5-2}):
$w^{EM}_3$ in formula (\ref{5x5-2}) is the same as $\lambda w^{EM}_3$ in formula (\ref{4x4-2}), which is already proven to be positive.

Formula (\ref{5x5-3}):
\begin{align*}
w^{EM}_1 &= x_2 \delta\lambda (1+\lambda\gamma-\gamma)(\delta+\lambda-1) \\ &=
x_2 \delta\lambda [1+\gamma(\lambda-1)][\delta+(\lambda-1)].
\end{align*}

Formula (\ref{5x5-4}):
\begin{align*}
w^{EM}_2 &= \frac{x_3}{x_1} \lambda(\gamma+\lambda-1)(1+\delta\lambda-\delta) \\ &=
\frac{x_3}{x_1} \lambda[\gamma+(\lambda-1)][1+\delta(\lambda-1)].
\end{align*}

Formula (\ref{5x5-5}):
\begin{align*}
w^{EM}_1 &= {x_4} \delta\lambda(\gamma^2-2\gamma+\lambda^2\gamma+1)(\delta+\lambda-1) \\ &=
{x_4} \delta\lambda[\gamma^2+\gamma(\lambda^2-2)+1][\delta+(\lambda-1)].
\end{align*}
\end{proof}

Using these formulas, the paper's main result can be obtained through a series of lemmas. Each of these lemmas corresponds to a directed edge in a digraph. Using these results, the direction of certain arcs can be determined. Thus, it will be shown that directed graphs of Cases 1, 2A and 2B are strongly connected. By Theorem \ref{thm:TheoremDirectedGraphEfficient}, ef{\kern0pt}f{\kern0pt}iciency of the principal right eigenvector is implied.

It follows from the positivity of $\textbf{w}^{EM}$ (see Remark \ref{AppRemark}), that both sides of the starting inequalities of each lemma can be multiplied by the respective $w^{EM}_i$ without further discussion. Since there are 28 lemmas, the proofs are in the Appendix.

Cases of $\delta=1$ and $\gamma=1$ are not covered by Lemmas \ref{lemma1a}--\ref{lemma3h} due to Remark \ref{SimpleConsistent}.

The f{\kern0pt}irst group of lemmas correspond to Case 1 ($\gamma$ and $\delta$ are in the same row), i.e., the double perturbed PCM is written in form (\ref{case1}).

\begin{customlemma}{1a}[Case 1]
\label{lemma1a}
$\delta > 1$ and $\delta \geq \gamma \Rightarrow w_1^{EM}/w_2^{EM} < \delta x_1$.
\end{customlemma}
\begin{proof}
Using formula (\ref{egysor-2}),
$$
\frac{w_1^{EM}}{w_2^{EM}} = x_1 \frac{\gamma\lambda \left( \delta\lambda-(n-2)\delta+\gamma+n-3 \right)}{\gamma\lambda^3 - (n-1)\gamma\lambda^2 -(n-3)(\gamma^2-2\gamma+1)}.
$$
Substitute $\lambda = \lambda_{\max}$ in the characteristic polynomial $p_\mathbf{P}(\lambda)$ by Proposition \ref{karpol1}:
\[
(-1)^n\lambda^{n-3} \left( \lambda^3 - n\lambda^2 - \left(\frac{\gamma}{\delta} + \frac{\delta}{\gamma}\right) - (n-3)\left(\gamma+\delta+\frac{1}{\gamma}+\frac{1}{\delta}\right) + 4n -10 \right) = 0,
\]
which can be transformed to
\begin{equation}
\label{lemma1aeq}
\gamma\delta \lambda^3 - \gamma\delta n \lambda^2 = \gamma^2 + \delta^2 + (n-3)\left(\gamma^2\delta+\gamma\delta^2  + \delta + \gamma\right)-\gamma\delta(4n-10).
\end{equation}
The statement to be proven is equivalent to
$$
\gamma\lambda\left(\delta\lambda+\gamma-(n-2)\delta+n-3\right) < \delta\left(\gamma\lambda^3-(n-1)\gamma\lambda^2-(n-3)(\gamma-1)^2\right).
$$
Using (\ref{lemma1aeq}) this is further equivalent to
\[
(\gamma\delta\lambda^3-\gamma\delta n \lambda^2) + \lambda\left(\gamma\delta(n-2)-\gamma^2-\gamma n + 3 \gamma \right) - \delta (n-3)(\gamma - 1)^2 > 0.
\]
Now apply further equivalent transformations:
\begin{multline*}
\gamma\lambda\left(\delta(n-2)-\gamma-n+3\right) +
\gamma^2 + \delta^2 - (n-3)(\delta\gamma^2-2\delta\gamma + \delta)  \\ + (n-3)\left(\delta^2\gamma+\delta\gamma^2+\delta+\gamma\right)-\delta\gamma(4n-10) > 0
\end{multline*}
\vspace{-1cm}
\begin{multline*}
\gamma\lambda\left((\delta-1)(n-3)+\delta-\gamma\right) +
\gamma^2 + \delta^2 \\ +  (n-3)\left(\delta^2\gamma+2\delta\gamma+\gamma\right)-4\delta\gamma(n-3)-2\delta\gamma > 0
\end{multline*}
\[
\gamma\lambda\left((\delta-1)(n-3)+(\delta-\gamma)\right) + \gamma(n-3)(\delta-1)^2 + (\delta-\gamma)^2 > 0.
\]
\end{proof}

\begin{customlemma}{1b}[Case 1]
\label{lemma1b}
$\delta < 1$ and $\delta \leq \gamma \Rightarrow w_1^{EM}/w_2^{EM} > \delta x_1$.
\end{customlemma}
\begin{proof}
According to formula (\ref{egysor-2})
$$
\frac{w_1^{EM}}{w_2^{EM}} = x_1 \frac{\gamma\lambda \left( \delta\lambda-(n-2)\delta+\gamma+n-3 \right)}{\gamma\lambda^3 - (n-1)\gamma\lambda^2 -(n-3)(\gamma^2-2\gamma+1)}.
$$
Transforming (\ref{lemma1aeq}) similar to Lemma \ref{lemma1a},
\[
\gamma\lambda\left((\delta-1)(n-3)+\delta-\gamma\right)+\gamma(n-3)(\delta-1)^2+(\delta-\gamma)^2<0.
\]
Transforming this further yields
\begin{align*}
\gamma(\delta-1)(n-3)\left(\lambda+(\delta-1)\right)+\gamma\lambda(\delta-\gamma)+(\delta-\gamma)^2 &< 0\\
\gamma(\delta-1)(n-3)\left(\lambda+(\delta-1)\right)+(\delta - \gamma)(\gamma(\lambda-1)+\delta) &< 0.
\end{align*}
\end{proof}

\begin{customlemma}{1c}[Case 1]
$\gamma>1$ and $\gamma \geq \delta \Rightarrow w_1^{EM}/w_3^{EM} < \gamma x_2$.
\end{customlemma}
\begin{proof}
The proof follows from switching the role of $\delta$ and $\gamma$ in the proof of Lemma \ref{lemma1a}.
\end{proof}

\begin{customlemma}{1d}[Case 1]
$\gamma<1$ and $\gamma \leq \delta \Rightarrow w_1^{EM}/w_3^{EM} > \gamma x_2$.
\end{customlemma}
\begin{proof}
The proof follows from switching the role of $\delta$ and $\gamma$ in the proof of Lemma \ref{lemma1b}.
\end{proof}

\begin{customlemma}{1e}[Case 1]
\label{lemma1e}
$\gamma,\delta>1 \Rightarrow w_1^{EM}/w_i^{EM} > x_{i-1}$, $i=4,\dots,n$.
\end{customlemma}
\begin{proof}
According to formula (\ref{egysor-1})
$$
\frac{w_1^{EM}}{w_i^{EM}} = x_{i-1} \frac{\gamma\delta\lambda(\lambda-n+1)}{\gamma+\delta+\gamma\delta\lambda-2\gamma\delta},
$$
which means the statement to be proven is equivalent to
$$
\gamma\delta\lambda(\lambda-n+1) > \gamma+\delta+\gamma\delta\lambda-2\gamma\delta.
$$
Further equivalent transformations yield
\begin{align*}
\left(\gamma\delta\lambda(\lambda-n)\right) + \left(2\gamma\delta - \gamma - \delta \right) &> 0\\
\gamma\delta\lambda(\lambda-n) + (\delta-1)(\gamma-1) + (\delta\gamma - 1) &> 0.
\end{align*}
\end{proof}

\begin{customlemma}{1f}[Case 1]
\label{lemma1f}
$\gamma,\delta<1 \Rightarrow w_1^{EM}/w_i^{EM} < x_{i-1}$, $i=4,\dots,n$.
\end{customlemma}
\begin{proof}
According to formula (\ref{egysor-4})
$$
\frac{w_1^{EM}}{w_i^{EM}} = x_{i-1} \frac{\gamma\delta\lambda (\delta+\gamma+\lambda-2)}{
\gamma\delta\lambda^2-4\gamma\delta+\gamma+\delta +\delta^2\gamma+\gamma^2\delta
 }.
$$
Applying further equivalent transformations
\begin{align*}
\frac{\gamma \delta \lambda (\delta + \gamma + \lambda - 2)}{\gamma \delta \lambda^2 - 4 \gamma \delta + \gamma + \delta + \delta^2 \gamma + \gamma^2 \delta} &< 1 \\
\gamma \delta \lambda (\delta + \gamma + \lambda - 2) &< \gamma \delta (\lambda^2 - 4) + \gamma + \delta + \delta^2 \gamma + \gamma^2 \delta \\
\lambda (\delta + \gamma + \lambda - 2) &< \lambda^2 - 4 + \frac{1}{\delta} + \frac{1}{\gamma} + \delta + \gamma \\
0 &< \lambda^2 - 4 + \frac{1}{\delta} + \frac{1}{\gamma} + \delta + \gamma - \lambda \delta - \lambda \gamma - \lambda^2 + 2 \lambda \\
0 &< 2(\lambda - 2) + (1-\lambda)(\delta + \gamma) + \frac{1}{\delta} + \frac{1}{\gamma} \\
0 &< 2(\lambda - 1) - 2 + (1-\lambda)(\delta + \gamma) + \frac{1}{\delta} + \frac{1}{\gamma} \\
0 &< (\lambda - 1)(2 - \delta - \gamma) + \frac{1}{\delta} + \frac{1}{\gamma} - 2.
\end{align*}
\end{proof}

\begin{customlemma}{1g}[Case 1]
$\delta \lesseqqgtr \gamma \Leftrightarrow w_2^{EM}/w_3^{EM} \gtreqqless x_2/x_1$.
\end{customlemma}
\begin{proof}
According to formula (\ref{egysor-1}), we need to consider
$$
\frac{w_2^{EM}}{w_3^{EM}} = \frac{x_2}{x_1} \cdot \frac{\gamma \lambda - (n-2)\gamma + \delta +(n-3)\gamma \delta}{\delta\lambda -(n-2)\delta + \gamma +(n-3)\gamma \delta} \gtreqqless 1.
$$
Applying further equivalent transformations
\begin{align*}
\gamma \lambda - (n-2)\gamma + \delta +(n-3)\gamma \delta &\gtreqqless \delta\lambda -(n-2)\delta + \gamma +(n-3)\gamma \delta \\
\lambda(\gamma-\delta)-(n-2)(\gamma-\delta)+\delta-\gamma &\gtreqqless 0 \\
(\gamma-\delta)(\lambda-n+1) &\gtreqqless 0.
\end{align*}
The third factor is positive because $\lambda > n$.
\end{proof}

\begin{customlemma}{1h}[Case 1]
\label{lemma1h}
$\delta \gtrless 1 \Leftrightarrow w_2^{EM}/w_i^{EM} \lessgtr x_{i-1}/x_1$, $i=4,\dots,n$.
\end{customlemma}
\begin{proof}
According to formula (\ref{egysor-1})
$$
\frac{w_2^{EM}}{w_i^{EM}} = \frac{x_{i-1}}{x_1} \cdot \frac{\gamma\lambda-(n-2)\gamma+\delta+(n-3)\gamma\delta}{\gamma+\delta+\gamma\delta\lambda-2\gamma\delta}.
$$
Equivalent transformations yield
\begin{align*}
\gamma\lambda-(n-2)\gamma+\delta+(n-3)\gamma\delta &< \gamma+\delta+\gamma\delta\lambda-2\gamma\delta \\
0 &< \gamma(\delta-1)(\lambda-n+1).
\end{align*}
The third factor is positive because $\lambda > n$.
\end{proof}

\begin{customlemma}{1i}[Case 1]
\label{lemma1i}
$\gamma \gtrless 1 \Leftrightarrow w_3^{EM}/w_i^{EM} \lessgtr x_{i-1}/x_2$, $i=4,\dots,n$.
\end{customlemma}
\begin{proof}
The proof follows from switching the role of $\delta$ and $\gamma$ in the proof of Lemma \ref{lemma1h}.
\end{proof}

\begin{customlemma}{1j}[Case 1]
\label{lemma1j}
$w_i^{EM}/w_j^{EM} = x_{j-1}/x_{i-1}$, $i,j=4,\dots,n$.
\end{customlemma}
\begin{proof}
It follows from each of formulas (\ref{egysor-1})--(\ref{egysor-4}).
\end{proof}

\begin{corollary}
\label{cycle1}
There exists a directed cycle in each graph corresponding to Case 1 (Figure \ref{fig:case1}):
\begin{align*}
\delta > 1, \gamma > \delta &: 1 \rightarrow i \rightarrow 2 \rightarrow 3 \rightarrow 1, \\
\gamma > 1, \gamma < \delta &: 1 \rightarrow i \rightarrow 3 \rightarrow 2 \rightarrow 1, \\
\delta > 1, \gamma < 1 &: 1 \rightarrow 3 \rightarrow i \rightarrow 2 \rightarrow 1,\\
\delta < 1, \gamma < \delta &: 1 \rightarrow 3 \rightarrow 2 \rightarrow i \rightarrow 1, \\
\gamma < 1, \gamma > \delta &: 1 \rightarrow 2 \rightarrow 3 \rightarrow i \rightarrow 1, \\
\delta < 1, \gamma > 1 &: 1 \rightarrow 2 \rightarrow i \rightarrow 3 \rightarrow 1.
\end{align*}
\end{corollary}

The second group of lemmas correspond to Case 2A ($\gamma$ and $\delta$ are in dif{\kern0pt}ferent rows, and matrix size is $4\times4$), i.e., the double perturbed PCM is written in form (\ref{case2}).

\begin{customlemma}{2a}[Case 2A]
\label{lemma2a}
$\delta \gtrless 1 \Leftrightarrow w^{EM}_1/w^{EM}_2 \lessgtr \delta x_1$.
\end{customlemma}
\begin{proof}
Formula (\ref{4x4-4}) is used for this proof. Multiplying both sides by $w^{EM}_2$, the statement to be proven 
can be written as:
\begin{multline}
\label{ineq2astart}
 x_3 \delta( \lambda\gamma+\lambda^2-2\lambda-\gamma+1+\lambda\delta-\delta+\delta\gamma  ) \\ \lessgtr
\delta x_1 \frac{x_{3}}{x_{1}}  \left(
\gamma+\lambda-1+\delta\lambda^2-2\lambda\delta+\delta+\lambda\delta\gamma-\delta\gamma
\right).
\end{multline}
Further equivalent transformations yield:
\begin{align*}
0 &\lessgtr
\lambda^2 \delta - \lambda^2 + 3\lambda - \lambda \gamma - 3\lambda \delta + \lambda\delta\gamma - 2\delta\gamma + 2 \gamma + 2 \delta - 2 \\
0 &\lessgtr
\lambda^2 (\delta - 1) + \lambda \gamma (\delta - 1) + 3\lambda (1-\delta) + 2 \gamma (1-\delta) + 2 (\delta - 1) \\
0 &\lessgtr
(\delta - 1)(\lambda (\lambda - 3) + \gamma (\lambda - 2) + 2).
\end{align*}
The second factor on the right hand side is always positive because $\lambda > n = 4$ and $\gamma, \delta >0$.
\end{proof}

\begin{customlemma}{2b}[Case 2A]
\label{lemma2b}
$\delta > 1, \gamma <1 \Rightarrow w^{EM}_1/w^{EM}_3 > x_2$.
\end{customlemma}
\begin{proof}
Formula (\ref{4x4-2}) is used in this proof. Multiplying both sides by $w^{EM}_3$, the statement of the lemma is equivalent to:
\begin{multline*}
x_{1} ( \delta\gamma\lambda^2-2\lambda\delta\gamma+1+2\lambda\gamma-2\gamma+\gamma^2 ) \\
 < x_2 \frac{x_{1}}{x_{2}} \gamma \left(
\lambda\gamma+\lambda^2-2\lambda-\gamma+1+\lambda\delta-\delta+\delta\gamma
\right).
\end{multline*}
Further equivalent transformations yield:
\begin{align}
0 &<
\lambda^2 \gamma - \lambda^2 \gamma \delta - 4 \lambda \gamma + \lambda \gamma^2 + 3 \lambda \delta \gamma - 2 \gamma^2 + 3\gamma + \delta\gamma^2 - \delta \gamma -1 \notag \\
0 &<
\lambda^2 \gamma (1-\delta) + \lambda \gamma (\gamma-1) + 3\lambda \gamma (\delta-1) + (\gamma-1) + 2\gamma(1-\gamma) + \delta\gamma (\gamma-1) \notag \\
\label{ineq2b}
0 &<
(1-\delta)\lambda\gamma(\lambda-3)+(\gamma-1)(\gamma(\lambda-2)+\delta\gamma+1).
\end{align}
The second factor on the right hand side is always positive because $\lambda > n = 4$ and $\gamma, \delta >0$.
\end{proof}

\begin{customlemma}{2c}[Case 2A]
$\delta < 1, \gamma >1 \Rightarrow w^{EM}_1/w^{EM}_3 < x_2$.
\end{customlemma}
\begin{proof}
The proof follows from the right hand side of (\ref{ineq2b}) being positive in the case of $\delta < 1, \gamma >1$.
\end{proof}

\begin{customlemma}{2d}[Case 2A]
\label{lemma2d}
$\delta,\gamma<1 \Rightarrow w^{EM}_1/w^{EM}_4<x_3$.
\end{customlemma}
\begin{proof}
Again, formula (\ref{4x4-2}) is used for this proof. Multiplying both sides by $w^{EM}_4$, the statement to be proven is equivalent to:
\begin{multline*}
x_{1} ( \delta\gamma\lambda^2-2\lambda\delta\gamma+1+2\lambda\gamma-2\gamma+\gamma^2 ) \\
 < x_3 \frac{x_{1}}{x_{3}}  \left(
\lambda+\lambda^2\gamma-2\lambda\gamma-1+\gamma+\delta+\lambda\delta\gamma-\delta\gamma
\right).
\end{multline*}
Further equivalent transformations yield:
\begin{align}
\lambda^2 \gamma \delta - \lambda^2\gamma + 4 \lambda \gamma - 3 \lambda \delta \gamma - \lambda +\gamma^2 - 3\gamma + \delta\gamma - \delta + 2
&< 0 \notag \\
(\delta-1)(\lambda^2 \gamma - 3 \lambda \gamma) + (\gamma-1)(\lambda + \gamma - 2 + \delta)
&< 0 \notag \\
\label{ineq2d}
(\delta-1)\lambda \gamma (\lambda - 3) + (\gamma-1)((\lambda - 2) + \gamma  + \delta)
&< 0.
\end{align}
The left hand side is negative if $\gamma,\delta<1$, because $\lambda > n = 4$.
\end{proof}

\begin{customlemma}{2e}[Case 2A]
$\delta,\gamma>1 \Rightarrow w^{EM}_1/w^{EM}_4>x_3$.
\end{customlemma}
\begin{proof}
The proof follows from the left hand side of (\ref{ineq2d}) being positive if $\gamma,\delta>1$.
\end{proof}

\begin{customlemma}{2f}[Case 2A]
\label{lemma2f}
$\delta,\gamma<1 \Rightarrow w^{EM}_2/w^{EM}_3>x_2/x_1$.
\end{customlemma}
\begin{proof}
Formula (\ref{4x4-1}) is used in this proof. Multiplying both sides by $w^{EM}_3$, the statement of the lemma can be written as:
\begin{multline*}
\frac{1}{x_{1}} \left(
\lambda^2\gamma-2\lambda\gamma+\delta+2\lambda\delta\gamma-2\delta\gamma+\delta\gamma^2
\right) \\ >
\frac{x_2}{x_1} \frac{1}{x_{2}} \gamma \left(
\gamma+\lambda-1+\delta\lambda^2-2\lambda\delta+\delta+\lambda\delta\gamma-\delta\gamma
\right).
\end{multline*}
Further equivalent transformations yield:
\begin{align}
0 &>
\lambda^2\gamma\delta - \lambda^2\gamma - 4\lambda\delta\gamma + 3\lambda\gamma + \lambda\delta\gamma^2 + \gamma^2 - 2\delta\gamma^2 + 3\delta\gamma - \gamma - \delta \notag \\
0 &>
(\delta-1)(\lambda^2\gamma-3\lambda\gamma)+(\gamma-1)(\lambda\delta\gamma-2\delta\gamma+\delta+\gamma)
\notag \\
\label{ineq2f}
0 &>
(\delta-1)\lambda\gamma(\lambda-3)+(\gamma-1)(\delta\gamma(\lambda-2)+\delta+\gamma).
\end{align}
The right hand side is negative if $\delta,\gamma<1$, because $\lambda > n = 4$.
\end{proof}

\begin{customlemma}{2g}[Case 2A]
$\delta,\gamma>1 \Rightarrow w^{EM}_2/w^{EM}_3<x_2/x_1$.
\end{customlemma}
\begin{proof}
The proof follows from the right hand side of (\ref{ineq2f}) being positive if $\delta,\gamma>1$.
\end{proof}

\begin{customlemma}{2h}[Case 2A]
$\delta<1,\gamma>1 \Rightarrow w^{EM}_2/w^{EM}_4 > x_3/x_1$.
\end{customlemma}
\begin{proof}
Again, formula (\ref{4x4-1}) is used in this proof. Multiplying both sides by $w^{EM}_4$, the statement to be proven is equivalent to:
\begin{multline*}
 \frac{1}{x_{1}} \left(
\lambda^2\gamma-2\lambda\gamma+\delta+2\lambda\delta\gamma-2\delta\gamma+\delta\gamma^2
\right) \\ >
\frac{x_3}{x_1}\frac{1}{x_{3}} \left(
1+\lambda\gamma-\gamma+\lambda\delta-\delta+\delta\gamma\lambda^2-2\lambda\delta\gamma+\delta\gamma
\right).
\end{multline*}
Further equivalent transformations yield:
\begin{align}
0 &>
\lambda^2\gamma\delta - \lambda^2\gamma - 4\lambda\delta\gamma + 3\lambda\gamma + \lambda\delta + 3\delta\gamma - \delta\gamma^2 - 2\delta - \gamma + 1 \notag\\
0 &>
(\delta-1)(\lambda^2\gamma - 3\lambda\gamma) + (1-\gamma)(\lambda\delta - 2\delta + \delta\gamma +1) \notag\\
\label{ineq2h}
0 &>
(\delta-1)\lambda\gamma(\lambda - 3) + (1-\gamma)(\delta(\lambda - 2) + \delta\gamma +1).
\end{align}
The right hand side of (\ref{ineq2h}) is negative, if $\delta<1,\gamma>1$, because $\lambda > n = 4$.
\end{proof}

\begin{customlemma}{2i}[Case 2A]
$\delta>1,\gamma<1 \Rightarrow w^{EM}_2/w^{EM}_4 < x_3/x_1$.
\end{customlemma}
\begin{proof}
The proof follows from the right hand side of (\ref{ineq2h}) being positive if $\delta>1,\gamma<1$.
\end{proof}

\begin{customlemma}{2j}[Case 2A]
\label{lemma2j}
$\gamma \gtrless 1 \Leftrightarrow w^{EM}_3/w^{EM}_4 \lessgtr \gamma x_3/x_2$.
\end{customlemma}
\begin{proof}
Once again, formula (\ref{4x4-1}) is used for the proof. Multiplying both sides by $w^{EM}_4$, the f{\kern0pt}irst statement (for $\gamma>1$) becomes equivalent to:
\begin{multline}
\label{ineq2jstart}
\frac{1}{x_{2}} \gamma \left(
\gamma+\lambda-1+\delta\lambda^2-2\lambda\delta+\delta+\lambda\delta\gamma-\delta\gamma
\right) \\
\lessgtr \gamma\frac{x_3}{x_2}\frac{1}{x_{3}} \left(
1+\lambda\gamma-\gamma+\lambda\delta-\delta+\delta\gamma\lambda^2-2\lambda\delta\gamma+\delta\gamma
\right).
\end{multline}
Applying further equivalent transformations:
\begin{align}
0 &\lessgtr
\lambda^2\delta\gamma - \lambda^2\delta - 3\lambda\delta\gamma + 3\lambda\delta + \lambda\gamma - \lambda + 2\delta\gamma - 2\delta - 2\gamma + 2 \notag\\
0 &\lessgtr
(\gamma-1)(\lambda^2\delta - 3\lambda\delta + \lambda + 2\delta -2) \notag\\
\label{ineq2j}
0 &\lessgtr
(\gamma-1)(\lambda\delta(\lambda - 3) + (\lambda -2) + 2\delta).
\end{align}
The second factor on the right hand side of (\ref{ineq2j}) is positive because $\lambda > n = 4$ and $\gamma, \delta >0$.
\end{proof}

\begin{corollary}
\label{cycle2}
There exists a directed cycle in each graph corresponding to Case 2A (Figure \ref{fig:case2}):
\begin{align*}
\delta > 1, \gamma > 1  &: 1 \rightarrow 4 \rightarrow 3 \rightarrow 2 \rightarrow 1, \\
\delta > 1, \gamma < 1 &: 1 \rightarrow 3 \rightarrow 4 \rightarrow 2 \rightarrow 1, \\
\delta < 1, \gamma < 1 &: 1 \rightarrow 2 \rightarrow 3 \rightarrow 4 \rightarrow 1, \\
\delta < 1, \gamma > 1 &: 1 \rightarrow 2 \rightarrow 4 \rightarrow 3 \rightarrow 1.
\end{align*}
\end{corollary}

The last group of lemmas correspond to Case 2B, when $\gamma$ and $\delta$ are in dif{\kern0pt}ferent rows, and matrix size is at least $5\times5$, i.e., the double perturbed PCM is written in form (\ref{case3}).

\begin{customlemma}{3a}[Case 2B]
$\gamma \gtrless 1 \Leftrightarrow w^{EM}_3/w^{EM}_4 \lessgtr \gamma x_3/x_2$.
\end{customlemma}
\begin{proof}
Using formula (\ref{5x5-1}) the proof is similar to the proof of Lemma \ref{lemma2j}, the only dif{\kern0pt}ference is in (\ref{ineq2jstart}) where both sides are multiplied by $\lambda$, which immediately cancel each other.
\end{proof}

\begin{customlemma}{3b}[Case 2B]
$\delta \gtrless 1 \Leftrightarrow w^{EM}_1/w^{EM}_2 \lessgtr \delta x_1$.
\end{customlemma}
\begin{proof}
Using formula (\ref{5x5-5}) the proof is similar to the proof of Lemma \ref{lemma2a}, the only dif{\kern0pt}ference is in (\ref{ineq2astart}) where both sides (the formula for $w^{EM}_1$ and $w^{EM}_2$) are multiplied by $\lambda$, which immediately cancel each other. This may not be apparent about $w^{EM}_2$, but
\[(\gamma+\lambda-1)(1+\delta\lambda-\delta) = \gamma+\lambda-1 + \lambda\gamma\delta + \lambda^2 \delta - \lambda \delta - \gamma \delta - \lambda \delta + \delta \]
which, after reduction, gives the same formula.
\end{proof}

\begin{customlemma}{3c}[Case 2B]
\label{lemma3c}
$\delta \gtrless 1 \Leftrightarrow w^{EM}_1/w^{EM}_i \gtrless x_{i-1}$,  $i=5,\dots,n$.
\end{customlemma}
\begin{proof}
Formula (\ref{5x5-3}) is used for this proof.
\begin{align*}
x_2 \delta\lambda (1+\lambda\gamma-\gamma)(\delta+\lambda-1) &\gtrless
x_{i-1} \frac{x_2}{x_{i-1}}
(1+\lambda\gamma-\gamma)(\delta\lambda^2+1-2\delta+\delta^2) \\
 \lambda\delta^2 + \lambda^2 \delta - \lambda \delta &\gtrless \delta\lambda^2+1-2\delta+\delta^2 \\
\lambda\delta (\delta - 1) + \delta (1-\delta) + (\delta - 1) &\gtrless 0 \\
(\delta - 1)(\delta(\lambda - 1)+1) &\gtrless 0.
\end{align*}
The second factor on the left hand side is always positive because $\lambda > n \geq 5$ and $\delta >0$.
\end{proof}

\begin{customlemma}{3d}[Case 2B]
\label{lemma3d}
$\delta \gtrless 1 \Leftrightarrow w^{EM}_2/w^{EM}_i \lessgtr x_{i-1}/x_1$, $i=5,\dots,n$.
\end{customlemma}
\begin{proof}
Again, formula (\ref{5x5-3}) is used in the proof.
\begin{align*}
\frac{x_2}{x_1}
\lambda(1+\lambda\gamma-\gamma)(1+\delta\lambda-\delta) &\lessgtr
\frac{x_{i-1}}{x_1} \frac{x_2}{x_{i-1}}
(1+\lambda\gamma-\gamma)(\delta\lambda^2+1-2\delta+\delta^2) \\
 \lambda + \lambda^2 \delta - \delta \lambda &\lessgtr \delta\lambda^2+1-2\delta+\delta^2 \\
0 &\lessgtr
\lambda \delta - \lambda + \delta^2 - 2\delta + 1 \\
0 &\lessgtr \lambda (\delta - 1) + (\delta - 1)^2 \\
0 &\lessgtr (\delta - 1)((\lambda - 1)+\delta).
\end{align*}
The second factor on the right hand side is always positive because $\lambda > n \geq 5$ and $\delta >0$.
\end{proof}

\begin{customlemma}{3e}[Case 2B]
\label{lemma3e}
$\gamma \gtrless 1 \Leftrightarrow w^{EM}_3/w^{EM}_i \gtrless x_{i-1}/x_2$, $i=5,\dots,n$.
\end{customlemma}
\begin{proof}
Formula (\ref{5x5-2}) is used in this proof.
\begin{multline*}\frac{x_1}{x_2}\gamma\lambda
( \lambda\gamma+\lambda^2-2\lambda-\gamma+1+\delta\lambda-\delta+\delta\gamma ) \\ \gtrless
\frac{x_4}{x_2} \frac{x_1}{x_4}
(\lambda\gamma^2-2\lambda\gamma+\lambda^3\gamma +\lambda-\gamma^2+2\gamma-\lambda^2\gamma-1
+\delta-2\delta\gamma+\delta\gamma^2+\delta\gamma\lambda^2).
\end{multline*}
Further equivalent transformations yield:
\begin{align*}
\lambda^2 \gamma^2 - \lambda ^2 \gamma - 2 \lambda \gamma^2 + 3\lambda\gamma + \lambda\gamma^2 \delta - \lambda \delta \gamma - \lambda + \gamma^2 - 2\gamma + 1 - \delta + 2\delta
 \gamma - \delta \gamma^2 &\gtrless 0 \\
(\gamma-1)(\lambda^2\gamma - 2\lambda\gamma + \lambda + \lambda\delta\gamma + (\gamma -1) - \delta(\gamma-1)) &\gtrless 0  \\
(\gamma-1)(\lambda\gamma(\lambda-2) + (\lambda-1) + \delta\gamma(\lambda-1) + \gamma + \delta) &\gtrless 0.
\end{align*}
The second factor on the left hand side is always positive because $\lambda > n \geq 5$ and $\gamma, \delta >0$.
\end{proof}

\begin{customlemma}{3f} \label{lemma3f} ~

$\gamma > 1, \delta < 1 \Rightarrow w^{EM}_2/w^{EM}_4>x_3/x_1$.

$\gamma < 1, \delta > 1 \Rightarrow w^{EM}_2/w^{EM}_4<x_3/x_1$.
\end{customlemma}
\begin{proof}
Instead of the statement of the lemma, we will prove the following stronger statement:

\[\gamma \gtreqqless \delta \Leftrightarrow w^{EM}_2/w^{EM}_4 \gtreqqless x_3/x_1.\]

Formula (\ref{5x5-5}) is used in this proof.
\begin{multline*} \frac{x_4}{x_1}
\lambda(\gamma^2-2\gamma+\lambda^2\gamma+1)(1+\delta\lambda-\delta)
\\ \gtreqqless \frac{x_3}{x_1} \frac{x_4}{x_3}
\lambda(\delta\lambda^2+\lambda^3\delta\gamma-\delta\gamma\lambda^2
-2\lambda\delta\gamma-2\delta+2\delta\gamma-\gamma+1+\lambda\gamma+\delta^2+\delta^2\lambda\gamma-\delta^2\gamma). \end{multline*}
This is further equivalent to
\begin{multline*}\gamma^2 + \lambda\gamma^2\delta-\gamma^2\delta-2\gamma-2\lambda\gamma\delta + 2\gamma\delta + \lambda^2\gamma + \lambda^3\gamma\delta - \lambda^2\gamma\delta + 1 + \lambda\delta - \delta
\\ \gtreqqless \lambda^2\delta + \lambda^3\gamma\delta - \lambda^2\gamma\delta -2\lambda\gamma\delta - 2\delta + 2\gamma\delta - \gamma + 1 + \lambda\gamma + \delta^2 + \lambda\gamma\delta^2 - \gamma\delta^2. \end{multline*}
Further equivalent transformations yield
\[\lambda^2\gamma - \lambda^2\delta + \lambda\delta - \lambda\gamma + \lambda\gamma^2\delta - \lambda\gamma\delta^2 + \gamma^2 - \delta^2 + \gamma\delta^2 - \gamma^2\delta + 2\delta - 2\gamma + \gamma - \delta \gtreqqless 0 \]
\[\lambda^2(\gamma-\delta) + \lambda(\delta-\gamma) + \lambda\gamma\delta(\gamma-\delta) + (\gamma+\delta)(\gamma-\delta) + \gamma\delta(\delta-\gamma) + 2(\delta-\gamma) + (\gamma-\delta) \gtreqqless 0\]
\vspace{-0.6cm}
\begin{align*}
(\gamma-\delta)(\lambda^2 - \lambda + \lambda\gamma\delta + \gamma + \delta - \gamma\delta - 1) &\gtreqqless 0\\
(\gamma-\delta)(\lambda^2 - 2\lambda + \lambda\gamma\delta - \gamma\delta + \lambda - 1 + \gamma + \delta) &\gtreqqless 0 \\
(\gamma-\delta)(\lambda(\lambda - 2) + \gamma\delta(\lambda - 1) + (\lambda - 1) + \gamma + \delta) &\gtreqqless 0.
\end{align*}
The second factor on the left hand side is always positive because $\lambda > n \geq 5$ and $\gamma, \delta >0$.
\end{proof}

\begin{customlemma}{3g}[Case 2B] \label{lemma3g} ~

$\gamma, \delta > 1 \Rightarrow w^{EM}_1/w^{EM}_4>x_3$.

$\gamma, \delta < 1 \Rightarrow w^{EM}_1/w^{EM}_4<x_3$.
\end{customlemma}
\begin{proof}
Instead of the above statement, we will prove the following stronger statement:
\[\gamma\delta \gtreqqless 1 \Leftrightarrow w^{EM}_1/w^{EM}_4 \gtreqqless x_3.\]
Formula (\ref{5x5-5}) is used in this proof.
\begin{multline*}
{x_4} \delta\lambda(\gamma^2-2\gamma+\lambda^2\gamma+1)(\delta+\lambda-1) \\ \gtreqqless x_3 \frac{x_4}{x_3}
\lambda(\delta\lambda^2+\lambda^3\delta\gamma-\delta\gamma\lambda^2
-2\lambda\delta\gamma-2\delta+2\delta\gamma-\gamma+1+\lambda\gamma+\delta^2+\delta^2\lambda\gamma-\delta^2\gamma). \end{multline*}
Further equivalent transformations yield:
\begin{align*}
\lambda^2\delta^2\gamma - \lambda^2\delta + \lambda\gamma^2\delta - \lambda\gamma\delta^2 + \lambda\delta - \lambda\gamma + \gamma^2\delta^2 - \gamma\delta^2 - \delta\gamma^2 + \delta + \gamma - 1 &\gtreqqless 0 \\
(\delta\gamma - 1)(\lambda^2\delta + \lambda\gamma - \lambda\delta + \delta\gamma + 1 - \delta - \gamma) &\gtreqqless 0 \\
(\delta\gamma - 1)(\lambda\delta(\lambda-2) + \gamma(\lambda-1) + \delta(\lambda-1) + \delta\gamma + 1) &\gtreqqless 0.
\end{align*}
The second factor is always positive because $\lambda > n \geq 5$ and $\gamma, \delta >0$. The f{\kern0pt}irst factor is positive exactly if $\gamma\delta > 1$, and negative exactly if $\gamma\delta < 1$.
\end{proof}

\begin{customlemma}{3h}[Case 2B]
\label{lemma3h}
$w_i^{EM}/w_j^{EM} = x_{j-1}/x_{i-1}$, $i,j=5,\dots,n$.
\end{customlemma}
\begin{proof}
It follows from each of formulas (\ref{5x5-1})--(\ref{5x5-5}).
\end{proof}

\begin{corollary}
\label{cycle3}
There exists a directed cycle in each graph corresponding to Case 2B (Figure \ref{fig:case3}):
\begin{align*}
\delta > 1, \gamma > 1  &: 1 \rightarrow 4 \rightarrow 3 \rightarrow i \rightarrow 2 \rightarrow 1, \\
\delta > 1, \gamma < 1 &: 1 \rightarrow i \rightarrow 3 \rightarrow 4 \rightarrow 2 \rightarrow 1, \\
\delta < 1, \gamma < 1 &: 1 \rightarrow 2 \rightarrow i \rightarrow 3 \rightarrow 4 \rightarrow 1, \\
\delta < 1, \gamma > 1 &: 1 \rightarrow 2 \rightarrow 4 \rightarrow 3 \rightarrow i \rightarrow 1.
\end{align*}
\end{corollary}

\end{document}